\newtheorem{thm}{Theorem}
\newtheorem{cor}[thm]{Corollary}
\newtheorem{lem}[thm]{Lemma}
\newtheorem{assum}[thm]{Assumption}
\newtheorem{rem}[thm]{Remark}
\newcommand{\e}{\mathrm{e}}
\newcommand{\diff}{\mathrm{d}}
\newcommand{\D}{\mathcal{D}}
\newcommand{\K}{\mathcal{K}}
\newcommand{\tobs}{t_{\mathrm{obs}}}
\newcommand{\tstab}{t_{\mathrm{stab}}}
\newcommand{\tobsmax}{\bar{t}_\mathrm{obs}}
\newcommand{\gmin}{g_{\min}}
\renewcommand{\epsilon}{\varepsilon}
\newcommand{\eps}{\epsilon}
\newcommand{\R}{\mathbb{R}}
\newcommand{\xhat}{\hat{x}}
\renewcommand{\geq}{\geqslant}
\renewcommand{\leq}{\leqslant}
\newcommand{\kmax}{k_{\max}}
\newcommand{\N}{\mathbb{N}}
\newcommand{\gram}{G}
\newcommand{\smax}{\bar{s}}
\newcommand{\smin}{\ubar{s}}
\DeclareMathOperator{\Id}{Id}
\DeclareMathOperator{\tr}{tr}
\DeclareMathOperator{\diam}{diam}
\DeclareMathOperator{\dist}{dist}
\newcommand{\sym}{S^n_{++}}
\newcommand{\Khat}{\widehat{\K}}
\renewcommand{\S}{\mathcal{S}}
\newcommand{\ubar}[1]{\underaccent{\bar}{#1}}
\title{Output feedback stabilization of non-uniformly
observable systems by means of a switched Kalman-like observer}
\author[1]{Lucas Brivadis}
\author[2]{Ludovic Sacchelli}
\affil[1]{\small Université Paris-Saclay, CNRS, CentraleSupélec, Laboratoire des Signaux et Systèmes, 91190, Gif-sur-Yvette, France. (email: lucas.brivadis@centralesupelec.fr)}
\affil[2]{\small Inria, Université Côte d’Azur, CNRS, LJAD, MCTAO team, Sophia Antipolis, France.
 (email: {ludovic.sacchelli@inria.fr})}
\begin{document}

\maketitle



\begin{abstract}                
We propose to explore switching methods in order to recover some properties of Kalman-like observers for output feedback stabilization of state-affine systems that may present observability singularities. The self-tuning gain matrix in Kalman-like observers tend to be singular in the case of non-uniformly observable systems. We show in the case of state-affine systems with observable target that it can be prevented by dynamically monitoring observability of the system, and switching the control when it becomes critical. 
\end{abstract}


\paragraph{Keywords: }
Non-uniformly observable systems, Output feedback stabilization, Observability, Observers, Switched systems.

\section{Introduction}

Coupling
a stabilizing state feedback and an observer is a tried and tested method for stabilization of systems whose state may be only partially known \cite{AndrieuPraly2009}. A major issue in designing that coupling in the context of nonlinear autonomous systems is that observability of the system (hence, the ability of the observer to estimate the state), may vary depending on the control. It was shown in the 90' (see \cite{TeelPraly1994, jouan1996finite}) that under assumption of observability for any control, a separation principle could be obtained. Numerous works have been dedicated to lifting this assumption. Early on, it was shown in \cite{MR1359027} that allowing a time-periodic feedback law is sufficient to obtain local output feedback stabilization of non-uniformly observable systems. Later on, it was shown in \cite{MR2137765} that the existence of just one observable control was sufficient to achieve semi-global practical stabilization, that is, stabilization in any arbitrary small neighborhood of the target point. Both of these papers rely on a natural idea of dealing with estimation and stabilisation in two distinct alternating modes of a switching observer.
Essentially, \cite{MR2137765} relies on a periodic switching strategy of the control between observation modes (where the input makes the system observable and the observer converges) and stabilization modes (where the input steers the state to the target if the observer is sufficiently close to the state).
Over the years, this switching/hybrid strategy has been developed in various contexts of non-uniformly observable systems \cite{nesic, 9683664, Besanon2002}, and applied for example to anti-lock braking systems (ABS) \cite{9363594, preprintMPLLAR}.

In this paper, we focus on state-affine systems that are observable at the target.
Our goal is to rely on a Kalman-like observer with dynamic gain following a Lyapunov differential equation 
to estimate the state,  ensure semi-global asymptotic stability, and guarantee that if the system is uniformly observable then our strategy coincides with the usual nonlinear separation principle \cite{MR1280224}.
A major difficulty however, is that the pivotal properties of the dynamic gain matrix do not mesh well with non-uniform observability \cite{MR4252728}. Essentially, the dynamic gain can become singular when the system is unobservable, which may happen for the system in closed loop. Our goal is to propose a solution to avoid singularization of the gain matrix, 
but without requiring prior knowledge of singular inputs.
We 
tackle this issue by an event-triggered law guaranteeing lower boundedness of the Gram observability matrix via its online monitoring.

We apply the following procedure. \textit{(i)} For a fixed time, we apply to the system a null input. The system is observable, and we estimate the state with an observer whose dynamic gain follows a Lyapunov differential equation. Since the target remains an equilibrium under the null input, these \textit{observation modes} do not prevent stability. \textit{(ii)} After this mode, the system is in \textit{stabilization mode}, where the control is chosen as the stabilizing feedback evaluated on the observer. The system remains in stabilization mode for another fixed time, in order to guarantee the decrease of a Lyapunov function over an observation--stabilization cycle. However we do not stop the stabilization mode (and go back to \textit{(i)}) until the system's observability becomes too critical, a fact we measure thanks to online computation of the observability Gramian. In particular, the system will stop switching once it is sufficiently close to the target, allowing exact convergence.

\section{Problem statement}

Let $n\in \N$ be a positive integer, let $A:\R\to \R^{n\times n}$, and  $B:\R\to \R^{n\times 1}$, be  locally Lipschitz  maps valued in the set of $n\times n$, respectively $n\times 1$, real valued matrices. Let $C\in \R^{1\times n}$ be a linear form.
For any $u\in L^{\infty}([0,+\infty),\R)$, we consider the following single-input single-output (SISO) state-affine system
\begin{equation}\label{eq:syst}
\left\{
\begin{aligned}
&\dot x(t) = A(u(t))x(t) + B(u(t))\\
&y(t) = Cx(t).
\end{aligned}
\right.
\end{equation}
A particular instance of this dynamical model is the classical SISO bilinear case where $A(u) = A_0 + uA_1$, $B(u) = u B_1$, with $A_0,A_1\in \R^{n\times n}$ and $B_1\in \R^{n\times 1}$.

System~\eqref{eq:syst} is said to be \emph{observable for the control $u$ over $[t_0,t_1]$} if, for all pairs of solutions $((x_a,y_a),(x_b,y_b))$ of \eqref{eq:syst}, $\left.y_a\right|_{[t_0,t_1]}\equiv \left.y_b\right|_{[t_0,t_1]}$ implies $\left.x_a\right|_{[t_0,t_1]}\equiv \left.x_b\right|_{[t_0,t_1]}$. If the system is observable for any control over any time interval, then it is said to be \emph{uniformly observable in small time}. 
In this paper we only require observability at the target, i.e., observability of the system for the constant input $u=0$.
\begin{assum}\label{ass:0_observable}
The pair $(C, A(0))$ is observable.
\end{assum}

In the particular case of of bilinear systems, where $A(u)=A_0+u A_1$, the observability of the pair $(C, A_0)$ is satisfied over an open and dense subset of all possible pairs $(C, A_0)$. This contrasts with the existence of controls for which the system is unobservable, a property satisfied by a residual subset of the set of possible triples $(C,A_0,A_1)$ (see, for instance, \cite{brivadis:hal-03589760}).

We wish to study semi-global output feedback stabilization of \eqref{eq:syst} at $0\in\R^n$. To do so, we first assume that state feedback stabilization is achievable by means of a state feedback.


\begin{assum}\label{ass:stab}
There exists a locally Lipschitz bounded feedback law $\lambda:\R^n\to \R$ such that $0\in\R^n$ is a locally asymptotically stable equilibrium point of
the vector field $\R^n\ni x\mapsto A(\lambda(x))x+B(\lambda(x))$.
\end{assum}

Under Assumption~\ref{ass:stab} we denote by $\D$ the basin of attraction of the origin and set $\bar u := \sup_{\R^n}|\lambda|<+\infty$.
According to the converse Lyapunov theorem \cite[Theorem 2.296]{PralyBresch2022a} (see also \cite{MR1765429}, based on the previous works of \cite{kurzweiloriginal, kurzweil, massera}),
there exists a proper function $V\in C^\infty(\D, \R_+)$ such that $V(0)=0$ and
\begin{equation}\label{eq:lyap}
    \frac{\partial V}{\partial x}(x)(A(\lambda(x))x+B(\lambda(x))) \leq - V(x),
    \quad 
    \forall x\in\D.
\end{equation}

\begin{rem}
In the context of semi-global stabilization, the boundedness requirement in Assumption~\ref{ass:stab} is easy to fulfill. Indeed, if
there exists an unbounded smooth feedback law $\tilde \lambda$ such that $0$ is an equilibrium of
$x\mapsto A(\lambda(x))x+B(\lambda(x))$ that is locally asymptotically stable with basin of attraction $\tilde \D$,
then for any compact set $\K\subset\tilde \D$, and any bounded smooth feedback law $\lambda$ defined by $\lambda(x) = \tilde \lambda(x)$ for all $x\in\K$ and $\lambda(x) = 0$ for all $x\in\R^n\setminus\tilde \D$,
we have that $\lambda$ satisfies Assumption~\ref{ass:stab} with a basin of attraction containing $\K$.
In other words, one can always construct a bounded smooth stabilizing state feedback from an unbounded one, up to a reduction of the basin of attraction. In particular, note that if $\tilde \D = \R^n$ (i.e. there exists a smooth globally asymptotically stabilizing state feedback), then for any compact set $\K\subset\R^n$ this procedure allows to construct a bounded smooth locally asymptotically stabilizing state feedback with basin of attraction containing $\K$ (i.e. to achieve semi-global asymptotic state feedback stabilization).
\end{rem}

On the considered class of systems, we focus on Kalman-like observers.
Denoting by $\varepsilon=\xhat-x$ the estimation error, for a given $u\in L^{\infty}([0,+\infty),\R)$ and a given positive parameter $\theta$, we consider an observer given by
\begin{align}
    \dot \xhat(t)& = A(u(t))\xhat(t) + B(u(t)) - S(t)^{-1}C'(C\xhat(t)-y(t)),
    \label{eq:obs}
    \\
    \dot \eps(t)& = A(u(t))\eps(t) - S(t)^{-1}C'C\eps(t),
    \label{eq:eps}
    \\
    \dot S(t)&= -A(u(t))' S(t) -S(t) A(u(t)) -\theta(t) S(t) +C'C.
    \label{eq:S}
\end{align}
evolving on $\R^n\times\R^n\times\sym$, where $\sym$ denotes the set of positive definite matrices in $\R^{n\times n}$. Here, $S$ is a dynamic gain matrix following a Lyapunov differential equation \cite{MR1997753,MR1343974}.

We wish to follow a classical state-observer coupling in order to achieve stabilization in the output feedback case.
In the case of uniformly observable systems, it is well-known that semi-global dynamic output feedback stabilization can be achieved by choosing $u=\lambda(\xhat)$ and $\theta$ large enough, see e.g., \cite{MR1280224}.
Here we focus on the case of systems which are not uniformly observable, i.e., for which there exists a bounded input $u$ making \eqref{eq:syst} unobservable.
To achieve output feedback stabilization in that case, we propose a novel strategy based on switches in the control law and in the dynamics of the observer.

\section{Switching strategy and main result}



Let $\tobs$, $\tstab$, $\alpha$, $\beta$ and $\gmin$ be positive constants. Let $T\in(0, \tstab)$.
For all $(\xhat_0, \eps_0, S_0)\in
\R^n\times\R^n\times\sym
$,
let us construct according to the procedure below a sequence of switching times $(t_k)_{0\leq k< \kmax}$ in $\R_+\cup\{+\infty\}$ for some $k\in\N$ such that $\kmax\geq 3$,
and a continuous trajectory $(\xhat, \eps, S):\R_+\to\R^n\times\R^n\times\sym$ starting at $(\xhat_0, \eps_0, S_0)$ satisfying some dynamics that switch at each $t_k$.
The sequence $(t_k)$ is defined such that $t_0=0$, $t_{k+1}-t_k\geq \max(\tobs, \tstab)$ and either $\kmax=+\infty$, or $\kmax$ is finite and $t_{\kmax-1}=+\infty$. In that way, $([t_k, t_{k+1}))_{0\leq k< \kmax}$ is a partition of $\R_+$.

Set $t_0=0$.
Assume that $t_{2k}\geq 0$ is defined for some $k\in\N$ and is finite.
Set $t_{2k+1}=t_{2k}+\tobs$.
Over $[t_{2k}, t_{2k+1})$, define the Cauchy problem
\begin{equation}
\left\{
\begin{aligned}
    &\dot \xhat = A(0)\xhat - S^{-1}C'(C\xhat-y)
    \\
    &\dot \eps = A(0)\eps - S^{-1}C'C\eps
    \\
    &\dot S= -A(0)' S -S A(0) -\alpha S +C'C
\end{aligned}
\right.
\end{equation}
initialized at $t_{2k}$ by
$(\xhat,\eps,S)(t_{2k})
=
\begin{cases}
(\xhat_0,\eps_0,S_0)&\text{if }k=0\\
(\xhat,\eps,S)(t_{2k}^-)&\text{otherwise}
\end{cases}
$.
This system admits a unique global solution according to Cauchy--Lipschitz theorem since $\lambda$ is bounded and $S(t)$ is lower bounded (see \eqref{E:bound_below_S_obs}).
Then, on $[t_{2k+1},+\infty)$, we define the Cauchy problem 
\begin{equation}\label{E:system_u=lambda}
\left\{
\begin{aligned}
    &\dot \xhat = A(\lambda(\xhat))\xhat + B(\lambda(\xhat)) - S^{-1}C'(C\xhat-y)
    \\
    &\dot \eps = A(\lambda(\xhat))\eps - S^{-1}C'C\eps
    \\
    &\dot S= -A(\lambda(\xhat))' S -S A(\lambda(\xhat)) -\beta S +C'C
\end{aligned}
\right.
\end{equation}
initialized at $t_{2k+1}$ by
$$
(\xhat,\eps,S)(t_{2k+1})=
(\xhat,\eps,S)(t_{2k+1}^-)
$$
Using Grönwall's inequality, one can easily show that this system admits a unique global solution according to Cauchy--Lipschitz theorem since $\lambda$ is bounded and $S(t)$ is lower bounded by an exponentially decreasing function (see \eqref{E:bound_below_S_unobs}).
Then we define the next switching time $t_{2k+2}$ by
$$
t_{2k+2} = \inf\{t>t_{2k+1}+\tstab\mid G_{\lambda\circ\xhat}(t-T, t)\not >\gmin \Id\},
$$
i.e. $t_{2k+2}$ is the smallest time larger than $t_{2k+1}+\tstab$ such that the lowest eigenvalue of $G_{\lambda\circ\xhat}(t-T, t)$ is smaller than $\gmin$.
If $t_{2k+2} = +\infty$, then there are no more switches and $\kmax = 2k+3$.
This concludes the inductive construction of $(t_k)$ and $(S, \xhat, \eps)$.
To summarize, $[t_{2k}, t_{2k+1})$ are observation modes, while $[t_{2k+1}, t_{2k+2})$ are stabilization modes.
While observation modes have constant length $\tobs$, stabilization modes last at least $\tstab$ and as long as the observability of the system is considered to be sufficient (when compared to $\gmin$). This approach can be compared with the multi-observer approach where the observer dynamics switch between different modes depending on a criterion (see, e.g., \cite{petri:hal-03766736}). However, contrary to \cite{petri:hal-03766736}, our objective in doing so is not to increase speed of convergence but rather to robustify the observer with respect to observability singularities.

The resulting system can be written as:
\begin{equation}\label{eq:switch}
\left\{
\begin{aligned}
    &\dot x = A(u(t))\xhat + B(u(t))
    \\
    &\dot \xhat = A(u(t))\xhat + B(u(t)) - S^{-1}C'(C\xhat-Cx)
    \\
    &\dot S= -A(u(t))' S -S A(u(t)) -\theta(t) S +C'C,
\end{aligned}
\right.
\end{equation}
where 
$(x, \xhat, S)$ lies in $\R^n\times\R^n\times\sym$
and
$$(u(t), \theta(t)) = \begin{cases}
(0, \alpha) &\text{if } t_{2k}\leq t<t_{2k+1} \text{ for some }k\\
(\lambda(\xhat(t)), \beta) &\text{if } t_{2k+1}\leq t<t_{2k+2} \text{ for some }k\\
\end{cases}.$$


Now we state our main result on the semi-global output feedback stabilization of \eqref{eq:syst}.

\begin{thm}\label{th:main}
Suppose Assumptions~\ref{ass:0_observable} and \ref{ass:stab} hold.
For all compact set $\K\times\Khat\times\S\subset\D\times\R^n\times\sym$,
there exist positive constants $\tstab$, $\tobs$, $T$, $\gmin$, $\beta$ and $\alpha$ such that
the closed-loop system \eqref{eq:switch} is such that:
\begin{itemize}
    \item There is at most a finite number of switches, i.e., $\kmax<+\infty$.
    \item For all $(x_0, \xhat_0, S_0)\in \K\times\Khat\times\S$, the corresponding trajectory $(x, \xhat, S)$ of the closed-loop system \eqref{eq:switch} is such that $(x(t), \xhat(t))$ tend towards $(0, 0)$ as $t$ goes to infinity. Moreover, $S(t)$ tends toward the unique solution $S_\infty\in\sym$ of the Lyapunov equation $A(0)'S+SA(0)+\beta S = C'C$ and $S$ remains upper and lower bounded over $\R_+$.
    \item For all $R>0$, there exists $r>0$ such that for all $\tau\in\R_+$, if $|x(\tau)|<r$ and $|\xhat(\tau)|<r$,
    then $|x(t)|<R$ and $|\xhat(t)|<R$ for all $t\geq \tau$.
\end{itemize}

Moreover, the positive constants can be taken according to the following procedure:
for all $\tstab>0$,
there exists $\tobsmax>0$,
such that for all 
$\tobs\in(0,\tobsmax)$,
there exists
$\bar{T}>0$,
such that for all 
$T\in(0,\bar{T})$,
there exists
$\bar{g}_{\min}>0$,
such that for all $\gmin\in(0, \bar{g}_{\min})$
there exists
$\ubar{\beta}>0$,
such that for all $\beta>\ubar{\beta}$
there exists
$\ubar{\alpha}>0$,
such that for all 
$\alpha>\ubar{\alpha}$,
the result holds.
\end{thm}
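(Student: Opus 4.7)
My proof would combine two Lyapunov-like quantities: the state function $V$ from \eqref{eq:lyap} and the observer weight $W=\eps'S\eps$. The goal is to establish contraction of a suitable combination of $V$ and $W$ over each observation--stabilization cycle on the prescribed compact $\K\times\Khat\times\S$. The first step is to control $S$ and $S^{-1}$ in each mode: in observation mode, Assumption~\ref{ass:0_observable} and classical Kalman--Bucy-type estimates on the Lyapunov differential equation for $S$ give $\ubar{s}\Id\leq S(t)\leq \smax\Id$ on $[t_{2k}+\tobs,t_{2k+1}]$ with constants depending only on $\alpha$, $\tobs$ and $(C,A(0))$. In stabilization mode, Grönwall's inequality on \eqref{eq:S} gives only a crude exponential lower bound on $S$, but the switching rule $G_{\lambda\circ\xhat}(t-T,t)>\gmin\Id$ combined with the classical representation of $S^{-1}$ as a weighted observability Gramian yields $\|S^{-1}\|\leq c/\gmin$ on $[t_{2k+1}+T,t_{2k+2})$, uniformly in $k$.

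\textbf{Per-cycle contraction.} A direct computation on \eqref{eq:eps}--\eqref{eq:S} gives $\frac{d}{dt}(\eps'S\eps)=-\theta(t)\eps'S\eps-|C\eps|^2\leq-\theta(t)\eps'S\eps$, so $W$ contracts exponentially at rate $\alpha$ in observation and $\beta$ in stabilization modes. Combined with Step~1, this makes $|\eps|$ arbitrarily small at the end of any observation mode (by taking $\alpha$ large), and keeps it controlled by $1/\gmin$ during stabilization. On the state side, writing $\lambda(\xhat)=\lambda(x)+(\lambda(\xhat)-\lambda(x))$ and using Lipschitzness of $A$, $B$, $\lambda$ on bounded sets together with \eqref{eq:lyap}, I would obtain $\frac{d}{dt}V(x)\leq -V(x)+L|\eps|$ in stabilization modes, while in observation modes $V$ drifts by at most a modulus $\omega(\tobs)$ vanishing with $\tobs$, since $\dot x=A(0)x+B(0)$ leaves bounded sets in bounded time. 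A bootstrap using boundedness of $\lambda$ ensures trajectories stay in a common compact. Integrating over one full cycle and plugging the bound on $|\eps|$, one obtains $V(x(t_{2k+2}))\leq \rho V(x(t_{2k}))+\eta$ with $\rho<1$ and $\eta$ arbitrarily small; coupled with the $W$-decay, this yields convergence of $(x,\xhat)$ to $0$ on the prescribed compact and, via a Kurzweil-type continuity argument applied to the coupled $(V,W)$ energy, the Lyapunov-stability property stated in the third bullet.

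\textbf{Finiteness of switches (main obstacle).} The delicate step is showing that the Gramian criterion stops triggering after finitely many cycles. The contraction of Step~2 implies that $(x,\xhat,\eps)$ eventually enters any prescribed small neighborhood of $(0,0,0)$ at some switching instant $t_{2k+1}$. On such a neighborhood, during the subsequent stabilization mode, the observer--state coupling keeps $\xhat(\cdot)$ close on any compact horizon to a trajectory of $\dot \xhat=A(\lambda(\xhat))\xhat+B(\lambda(\xhat))$ starting near $0$, hence close to a limiting quasi-equilibrium. Continuity of the input-to-Gramian map $u(\cdot)\mapsto G_u(t-T,t)$ in $L^\infty$ then forces $G_{\lambda\circ\xhat}(t-T,t)$ close to the frozen Gramian associated with the limiting input, which can be made above $\gmin$ by an appropriate combined choice of $T$ small and $\gmin$ smaller still. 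The criterion then never triggers again, the trajectory stays in a single stabilization mode forever, and $\kmax<\infty$. The nested dependence of constants in the theorem statement matches this cascade: $\tstab$ first sets the per-cycle contraction of $V$, $\tobs$ is then small enough to preserve it, $T$ small so that the Gramian is on a short horizon, $\gmin$ smaller still to guarantee non-triggering near the equilibrium, and finally $\beta$ then $\alpha$ large to render the error $|\eps|$ negligible in the $V$-balance.
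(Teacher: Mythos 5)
Your proposal follows essentially the same route as the paper: uniform upper/lower bounds on $S$ obtained from the observation-mode Gramian and from the switching rule $g(t)>\gmin$, exponential decay of $\eps'S\eps$ at rate $\min(\alpha,\beta)$, a per-cycle contraction of the Lyapunov function with a perturbation term controlled by $|\eps|$, and finiteness of switches via continuity of the input-to-Gramian map near the observable zero input, with the same nesting of parameter choices. The only variations are cosmetic --- you track $V(x)$ through a Lipschitz-perturbation of the feedback where the paper tracks $V(\xhat)$ through the innovation term $S^{-1}C'C\eps$, and you use plain $L^\infty$ continuity where the paper invokes weak-$*$ compactness --- though you do gloss over the fact that just after entering the terminal neighborhood the sliding-window Gramian still depends on the pre-entry input and may force one final switch, a point the paper's Lemma~\ref{L:no_switches_R_infty} handles explicitly without affecting the conclusion.
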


\begin{rem}

\begin{itemize}
    \item Theorem~\ref{th:main} is a semi-global output feedback stabilization strategy for state-affine systems that are observable at the target. It can be interpreted as a nonlinear separation principle for this class of non-uniformly observable systems. Note however that, as usual for nonlinear systems \cite{AndrieuPraly2009}, the observer and the feedback law cannot be designed separately.

    \item Regarding stability of the closed-loop, note that the system is ony stable with respect to variables $x$ and $\xhat$, but not with respect to $S$, despite the attractivity of $S_\infty$. In other words, the system is only stable on the set $\{0\}\times\{0\}\times\sym$. This is due to the switching strategy, that prevent $S_\infty$ to be an equilibrium point during observation modes (since the gain of the observer is switched from $\beta$ to $\alpha$). Similarly, stability with respect to variables used in the switching condition (namely, $G_u(t-T, t)$, that can be integrated as a state variable thanks to \eqref{eq:res}-\eqref{eq:gram}) is not investigated, and should be tackled in future works.

    \item Observability for a given control on a time interval is an open condition. As a result, observability for the null control implies observability of small enough controls (on a given time frame). As is shown in the proof of Theorem~\ref{th:main}, this can be leveraged to prove that the system switches at most a finite number of times along a given trajectory. 
    It should be noted that in the case where the system is uniformly observable, we then recover the usual non-switching strategy for a separation principle (i.e. $u = \lambda(\xhat)$ and $\beta$ large enough), at least after the preliminary observation phase.

\end{itemize}

\end{rem}


The proof of this theorem is the result of a sequence of lemmas exposed in Section~\ref{S:proof}. We first show boundedness of the trajectories, which then helps prove convergence of the state and observer to the target, and finally discuss stability at the target.
Below, we recall some important properties of Lyapunov differential equations and observability Gramian.

\section{Lyapunov differential equation and Gram observability matrix}
For any bounded $u:[t_0,t_1]\to \mathbb R$, let $\Phi_u:\mathbb [t_0,t_1]^2\to \mathrm{GL}_n(\mathbb{R})$ be the state transition matrix such that
$$
\frac{\partial }{\partial t} \Phi_u(t,s) = -\Phi_u(t,s)A(u(t)), \quad \Phi_u(s,s)=\mathrm{Id}.
$$
We also have $\dfrac{\partial }{\partial s} \Phi_u(t,s) =A(u(s))\Phi_u(t,s) $ and
\begin{equation}\label{eq:res}
    \frac{\diff}{\diff t}\Phi_u(t, t-T) = - \Phi_u(t, t-T)A(u(t)) + A(u(t-T))\Phi_u(t, t-T).
\end{equation}

The \emph{Gram observability matrix}, or \emph{observability Gramian matrix}, for the control $u$ over the time interval $[t_0,t_1]$ is defined as
\begin{equation}
    \int_{t_0}^{t_1} \Phi_u(t_1,s)'C'C\Phi_u(t_1,s)\diff s=G_u(t_0,t_1).
\end{equation}
The Gram observability matrix contains some measure of the observability of a control $u$ by linking it to the positive-definiteness of $G_u(t_0,t_1)$. Indeed for any $\omega:[t_0,t_1]\to \R^n$ such that $\dot\omega(t)=A(u(t))\omega$,
$$
\int_{t_0}^{t_1} |C \omega(s)|^2\diff s= \omega(t_1)' G_u(t_0,t_1)\omega(t_1).
$$
Inobservability of the control $u$ over $[t_0,t_1]$ implies the existence of a nontrivial kernel for $G_u(t_0,t_1)$.
For constant inputs $u$, observability of the pair $(C, A(u))$ is equivalent to the positive-definiteness of $G_u(t_0,t_1)$ for any $t_1>t_0>0$.

On the set of symmetric matrices, we consider the Lyapunov differential equation with gain $\theta>0$
\begin{equation}\label{E:LyapEq}
\begin{aligned}
\dot S(t)&= -A(u(t))' S(t) -S(t) A(u(t)) -\theta S(t) +C'C,
\\
S(t_0)&=S_0\in \sym.
\end{aligned}
\end{equation}
The solution to this differential equation admits an explicit variation of constants type expression (see, for instance, \cite[Theorem 1.1.5]{MR1997753}):
\begin{equation} \label{E:VariationConst}
S(t)=
\e^{-\theta(t-t_0)}\Phi_u(t,t_0)'S(t_0)\Phi_u(t,t_0)
+
\int_{t_0}^{t} \e^{-\theta(t-s)}\Phi_u(t,s)'C'C\Phi_u(t,s)\diff s.
\end{equation}
In particular, the Gramian $G_u(t_0,t_1)$ is the evaluation at time $t_1$ of the solution of \eqref{E:VariationConst} with gain $\theta=0$ and initial condition $S(t_0)=0$.

Under the assumption that $S(t_0)> 0$, $S(t)>0$ for all $t\geq t_0$. Each member in the right-hand side of  \eqref{E:VariationConst} can bring competing lower bounds of $S$, depending on the context. One relies on the Gramian matrix, useful under observability assumptions
\begin{equation}\label{E:bound_below_S_obs}
    S(t)\geq \e^{-\theta(t-t_0)}\int_{t_0}^{t} \Phi_u(t,s)'C'C\Phi_u(t,s)\diff s=\e^{-\theta(t-t_0)}G_u(t_0,t),
\end{equation}
while the other leads to a worst case scenario lower bound, with $a_\infty=\sup_{s\in(t_0,t)}\|A(u(s))\|$:
\begin{equation}\label{E:bound_below_S_unobs}
    S(t)\geq \e^{-\theta(t-t_0)}\Phi_u(t,t_0)'S(t_0)\Phi_u(t,t_0)\geq \e^{-(\theta+2a_\infty)(t-t_0)}S_{\min}(t_0)\Id.
\end{equation}

These bounds can then be used for computations of a Lyapunov function for the error in equations \eqref{eq:obs}-\eqref{eq:S}: $\eps'S\eps$. Indeed, with constant gain $\theta$, 
$
    \frac{\diff}{\diff t}\eps'S\eps
    = -\theta\eps'S\eps - \eps'C'C\eps
    \leq -\theta\eps'S\eps.
$
Then for all $t\geq t_0$, $\eps'S\eps(t) \leq \e^{-\theta(t-t_0)}\eps'S\eps(t_{0})$, which translates to the crucial error bound
\begin{equation}\label{E:error_bound}
    |\eps(t)| \leq \e^{-\frac{\theta}{2}(t-t_0)} \sqrt{\frac{S_{\max}(t_{0})}{S_{\min}(t)}}|\eps(t_{0})|.
\end{equation}
Here $S_{\min}$ and $S_{\max}$ respectively denote the smallest and largest eigenvalues of the positive-definite matrix $S$.
Under observability assumption, it may then be worthwhile to bound $S_{\min}(t)$ by below using \eqref{E:bound_below_S_obs}, as $S_{\min}(t)\geq \e^{-\theta T}g(t)$, where $g(t)$ is the smallest eigenvalue of $G_u(t-T,t)$. 
On the computational side, we can use a Lyapunov differential style equation to compute the Gramian over a sliding interval of length $T$.
For all $t\geq T$,
\begin{equation}\label{eq:gram}
\frac{\diff}{\diff t}G_u(t-T, t) = - A(u(t))'G_u(t-T, t) - G_u(t-T, t)A(u(t)) + C'C -\Phi_u(t, t-T)'C'C\Phi_u(t, t-T).
\end{equation}
Regarding $S_{\max}$, we show that it is bounded provided $\theta$ is large enough. 
The quantity $\tr(S)$ satisfies for a given gain $\theta>0$
$$
\frac{\diff \tr(S)}{\diff t}=-2 \tr ( A' S)-\theta \tr(S)+\tr(C'C).
$$
Since we have $\tr ( A' S) \leq \sqrt{\tr (A'A)}\sqrt{\tr (S^2)}$, $\sqrt{\tr (S^2)}\leq \tr(S)$ and $\tr (C'C)=|C|^2$,
$$
\frac{\diff \tr (S) }{\diff t}\leq (-\theta+2 a_F) \tr (S) +\tr( C'C)
$$
with $a_F=\sup_{[t_0,t_1]}\sqrt{\tr (A'(u(t))A(u(t)))}$.
By Gr\"onwall's inequality,
$$
\tr(S(t))\leq \left(\tr(S(t_0))+|C|^2 (t-t_0)\right) \e^{-(\theta-2 a_F)(t-t_0)}.
$$
As a consequence, as soon as $\theta>2 a_F$,
\begin{equation}
    \label{E:majS}
    S_{\max}(t)\leq
    \tr(S(t)) \leq
    \max
    \left(
    \tr(S(t_0))
    ,
    \frac{|C|^2}{\theta-2 a_F}
    \right).
\end{equation}









\section{Proof of Theorem~\ref{th:main}}
\label{S:proof}
\subsection{Preliminaries and notations}\label{sec:prel}

For all $R>0$, we denote by $\D(R) = \{x\in\R^n\mid V(x)\leq R\}$ and by
$m(R)=\displaystyle\sup_{\D(R)}\left|
    \dfrac{\partial V}{\partial x}
\right|$
where $V$ is the Lyapunov function given by \eqref{eq:lyap}.
In particular, let $R_0>0$ be such that $\D(R_0)$ contains $\K$.
For all $R>0$, we have
$
m(R)
\diam \D(R)
\geq R
$. Indeed, according to the mean value theorem, we get
\begin{align}\label{E:dVdiam}
m(R)
\diam \D(R)
\geq
\sup_{x\in\D(R)} m(R) |x|
\geq \sup_{x\in\D(R)} V(x) = R.
\end{align}

In these preliminaries, we choose successively $\tstab$, $\tobs$, $T$ and $\gmin$.
Let $\tstab>0$ be fixed. Define
\begin{equation}\label{E:def_eta}
    \eta=
    \frac{
        \e^{\tstab}-1-\tstab
    }{2+\e^{\tstab}}>0.
\end{equation}
Set $a_0=\|A(0)\|$,
$
        a_\infty
        =
        \sup_{x\in \R^n} \| A(\lambda(x))\|
$
and define
\begin{equation}\label{E:def_tobsmax}
    \tobsmax 
    =
    \sup 
    \left\{
        t\geq 0: 
        \sup_{R\in (0,R_0]}
        \frac{m((1+\eta)R)}{R}
\left( a_0\diam \D(R) +R\right) t\e^{ a_0 t} <\eta
    \right\}.
\end{equation}
Equation~\eqref{E:dVdiam} implies that $\tobsmax\in(0, +\infty)$.
We pick $\tobs\in(0, \tobsmax)$ and $T<\bar{T}:=\min(\tobs,\tstab)/3$.
Then we pick $\bar{g}_{\min}>0$ such that 
$G_u(0,T)>\bar{g}_{\min}\Id$ for any $u = \lambda\circ \xhat$ such that $V(\xhat)\leq R_\infty$ and choose $\gmin\in(0, \bar{g}_{\min})$. We denote by $g_0$ the smallest eigenvalue of $G_0(0,T)$. Clearly, $\bar{g}_{\min}<g_0$ since the equilibrium trajectory $\xhat\equiv 0 $ leads to $\lambda\circ \xhat=0$. For all $t\geq T$, we denote by $g(t)$ the smallest eigenvalue of $G_u(t-T,t)$ for the control $u$ set in \eqref{eq:switch}, so that $G_u(t-T,t)>\gmin\Id$ and $G_u(t-T,t)\not>\gmin \Id$ can be shortened to $g(t)>\gmin$ and $g(t)\leq \gmin$, respectively.

In the following section, we discuss boundedness of the trajectories, which imply the possible choices for  $\ubar{\beta}$ and $\ubar{\alpha}$. With
$a_F=\sup_{x\in \R^n}\sqrt{ \tr(A(\lambda(x))'A(\lambda(x)))}$, we assume $\ubar{\beta}$ and   $\ubar{\alpha}$ large enough so that 
$\sup_{\S} \tr(S)>\dfrac{|C|^2}{\min(\ubar{\alpha},\ubar{\beta})-2 a_F}$. This allows to assume, in conjunction with \eqref{E:majS}, the existence of $\smax>0$ independent of $\alpha,\beta$, such that  all trajectories of \eqref{eq:switch} starting in $\K\times\Khat\times\S$ have $S_{\max}(t)<\smax$ for all $t\in \R_+$.
Let $(x_0, \xhat_0, S)$ be in $\K\times\Khat\times\S$.
Let us investigate the corresponding trajectory $(x, \xhat, S)$.

\subsection{Trajectories are bounded}


Recall that $t_{2k+1}=t_{2k}+\tobs$, $t_{2k+2}\geq t_{2k+1}+\tstab$. 
The first, and most technical, step of the boundedness proof is to show that over a sequence of switches on the time interval $[t_{2k},{t_{2k+1}+\tstab})$, the system remains bounded. We show that the Lyapunov for the system can grow in observability mode, but up to a tuning of the parameters, the stabilisation mode will compensate for that growth and ensure that the value of the Lyapunov at $t_{2k+1}+\tstab$ did not worsen from its value at $t_{2k}$.

\begin{lem}
\label{L:borne_alpha}
We define positive constants $K_1, K_2$, depending only on the problem data, $\K\times\Khat\times\S$ and $\tstab$, as follows:
with $d_0=\diam \left(\D( R_0)\cup \Khat\right)$, $D=\dist\big(\D((1+\eta)R_0),\D((1+2\eta)R_0)^c\big)$, $D'=\dist\big(\D((1-\eta)R_0),\D(R_0)^c\big)$, and $\bar m=m((1+2\eta)R_0)$
$$
K_1=\frac{\min\left(D,D'\e^{-a_\infty \tstab} \right)^2}{\smax d_0^2}
\quad \text{ and } \quad
K_2=\frac{ R_0^2\e^{-6a_\infty \tstab} }{\smax d_0^2 |C|^4  \bar m^2}.
$$
Assume $\alpha$ large enough so that
\begin{equation}\label{E:min_alpha_0}
    \e^{-\alpha(\tobs-T)}<K_1 g_0
\quad
\text{ and }
\quad 
\e^{-\alpha(\tobs-3T)}<K_2 g_0^3 \, \e^{-2\beta \tstab},
\end{equation}
then for all integers $k$ such that $2k\in [0,\kmax)$, if $V(x(t_{2k}))\leq R_0$, and either $V(\xhat(t_{2k}))\leq (1-\eta)R_0$ if $k\geq 1$ or $\xhat(0)\in \Khat$ if $k=0$, we have
$$
V(x(t_{2k+1}))\leq  (1+\eta)R_0, 
\qquad\quad 
V(\xhat(t_{2k+1}))\leq (1+2\eta)R_0,
$$
$$
V(x(t_{2k+1}+\tstab)) \leq R_0,
\quad \text{ and } \quad
V(\xhat(t_{2k+1}+\tstab))\leq (1-\eta)R_0.
$$

\end{lem}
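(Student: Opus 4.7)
The lemma is an invariance statement over one observation--stabilization cycle, so the plan is to analyse the two modes in turn and to control in each of them both the state $x$ (through the Lyapunov function $V$) and the error $\eps$ (through the observer estimates recalled in Section 3). The constants $\eta$, $K_1$, $K_2$ are engineered precisely so that the four target bounds close up at $t_{2k+1}+\tstab$.

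\textbf{Observation mode $[t_{2k},t_{2k+1}]$.} Here $u=0$, so $\dot x=A(0)x+B(0)$. Gronwall applied to $|x|$, combined with the chain--rule estimate $|\dot V(x)|\leq m(V(x))(a_0|x|+|B(0)|)$ and the very definition \eqref{E:def_tobsmax} of $\tobsmax$, yields $V(x(t))\leq(1+\eta)R_0$ throughout the interval. For the error, observability of $(C,A(0))$ gives $G_0(t_{2k+1}-T,t_{2k+1})\geq g_0\Id$ (using $T<\tobs$), hence \eqref{E:bound_below_S_obs} produces $S_{\min}(t_{2k+1})\geq \e^{-\alpha T}g_0$. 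Inserting this together with $S_{\max}\leq\smax$ and $|\eps(t_{2k})|\leq d_0$ into \eqref{E:error_bound} gives
\[
|\eps(t_{2k+1})|^{2}\leq \frac{\smax}{g_0}\,\e^{-\alpha(\tobs-T)}\,d_0^{2}.
\]
The first hypothesis $\e^{-\alpha(\tobs-T)}<K_1g_0$, with $K_1=\min(D,D'\e^{-a_\infty\tstab})^{2}/(\smax d_0^{2})$, then forces $|\eps(t_{2k+1})|<D$; since $x(t_{2k+1})\in\D((1+\eta)R_0)$ and $D=\dist(\D((1+\eta)R_0),\D((1+2\eta)R_0)^c)$, the triangle inequality yields $\xhat(t_{2k+1})\in\D((1+2\eta)R_0)$, which is the second announced bound.

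\textbf{Stabilization mode $[t_{2k+1},t_{2k+1}+\tstab]$.} Now $u=\lambda(\xhat)$. Adding and subtracting the vector field evaluated at $\lambda(x)$ and invoking \eqref{eq:lyap}, one obtains $\dot V(x)\leq -V(x)+C_1|\eps|$, with $C_1$ depending only on $\bar m$ and on Lipschitz constants of $A\circ\lambda$ and $B\circ\lambda$. The analogous computation on $\xhat$ using \eqref{E:system_u=lambda} gives $\dot V(\xhat)\leq -V(\xhat)+C_1|\eps|+\bar m|C|^{2}|\eps|/S_{\min}$. Since $\lambda(\xhat)$ need not be observable, the only lower bound on $S_{\min}$ available on this interval is the worst-case \eqref{E:bound_below_S_unobs} propagated from $S_{\min}(t_{2k+1})\geq \e^{-\alpha T}g_0$, namely $S_{\min}(t)\geq \e^{-\alpha T}g_0\,\e^{-(\beta+2a_\infty)(t-t_{2k+1})}$. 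The $\beta$-dissipation of $\eps'S\eps$, combined with this bound, produces estimates of the form
\[
|\eps(t)|\leq \sqrt{\smax\e^{\alpha T}/g_0}\,\e^{a_\infty\tstab}\,|\eps(t_{2k+1})|,\qquad
\frac{|\eps(t)|}{S_{\min}(t)}\leq \frac{\sqrt{\smax}\,\e^{3\alpha T/2}\,\e^{(\beta+3a_\infty)\tstab}}{g_0^{3/2}}\,|\eps(t_{2k+1})|
\]
on $[t_{2k+1},t_{2k+1}+\tstab]$. Composed with the observation--mode estimate on $|\eps(t_{2k+1})|$, the second hypothesis $\e^{-\alpha(\tobs-3T)}<K_2 g_0^{3}\e^{-2\beta\tstab}$ (with $K_2$ shaped precisely to absorb the accumulated factors $\smax$, $d_0^{2}$, $|C|^{4}\bar m^{2}$ and $\e^{6a_\infty\tstab}$) is then exactly what is needed to control both perturbations uniformly on the interval by a quantity of order $\eta R_0$.

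Integrating the two Gronwall inequalities $\dot V(x)\leq -V(x)+(\text{small})$ and $\dot V(\xhat)\leq -V(\xhat)+(\text{small})$ from the initial values $V(x(t_{2k+1}))\leq (1+\eta)R_0$ and $V(\xhat(t_{2k+1}))\leq(1+2\eta)R_0$ gives $V(x(t_{2k+1}+\tstab))\leq \e^{-\tstab}(1+\eta)R_0+(\text{small})$ and $V(\xhat(t_{2k+1}+\tstab))\leq \e^{-\tstab}(1+2\eta)R_0+(\text{small})$. The value $\eta=(\e^{\tstab}-1-\tstab)/(2+\e^{\tstab})$ fixed in \eqref{E:def_eta} is exactly the largest one absorbing these residuals into the target inequalities $V(x(t_{2k+1}+\tstab))\leq R_0$ and $V(\xhat(t_{2k+1}+\tstab))\leq(1-\eta)R_0$. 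The main obstacle is the stabilization--mode step: with \eqref{E:bound_below_S_obs} unavailable, $|\eps|$ and $|\eps|/S_{\min}$ can a priori grow exponentially in $\tstab$ with rates involving $\beta$ and $a_\infty$, and the only degree of freedom left to cancel them is the exponential smallness of $|\eps(t_{2k+1})|$ in $\alpha$ accumulated during the previous observation mode --- which is precisely what the intricate balance in \eqref{E:min_alpha_0} enforces.
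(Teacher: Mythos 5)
Your overall architecture (Gr\"onwall plus the definition of $\tobsmax$ in observation mode; error contraction through $S_{\min}(t_{2k+1})\geq\e^{-\alpha T}g_0$; worst-case decay of $S$ and $\beta$-dissipation of $\eps'S\eps$ in stabilization mode) matches the paper's, and the first two conclusions are obtained essentially the same way. The gap is in how you close the cycle at $t_{2k+1}+\tstab$. You estimate $V(x)$ along the stabilization flow by writing $\dot V(x)\leq -V(x)+C_1|\eps|$ with $C_1$ built from Lipschitz constants of $A\circ\lambda$ and $B\circ\lambda$; but those constants appear nowhere in $K_1$ or $K_2$, so the hypotheses \eqref{E:min_alpha_0} as stated cannot absorb that term, and your claim that the second inequality is ``exactly what is needed'' is unsupported. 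The paper never differentiates $V(x)$ in stabilization mode: it only propagates $V(\xhat)$, whose dynamics satisfies \eqref{eq:lyap} exactly (the sole perturbation being the innovation $S^{-1}C'C\eps$ --- your extra $C_1|\eps|$ term there is spurious and reveals that the roles of $x$ and $\xhat$ are reversed in your decomposition), and then recovers $V(x(t_{2k+1}+\tstab))\leq R_0$ at the endpoint alone, from $V(\xhat(t_{2k+1}+\tstab))\leq(1-\eta)R_0$ together with $|\eps(t_{2k+1}+\tstab)|\leq D'=\dist\big(\D((1-\eta)R_0),\D(R_0)^c\big)$. That is precisely the role of the factor $D'\e^{-a_\infty\tstab}$ inside $K_1$, which your argument never invokes --- a clear sign the intended mechanism is missing.

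The second problem is the final absorption step. From a perturbation uniformly bounded by $R_0$ (which is all that \eqref{E:min_alpha_0} delivers), the comparison you invoke gives $V(\xhat(t_{2k+1}+\tstab))\leq(1+2\eta)R_0\e^{-\tstab}+R_0(1-\e^{-\tstab})=R_0(1+2\eta\e^{-\tstab})>R_0$, which is \emph{not} below $(1-\eta)R_0$; so the assertion that $\eta$ ``absorbs these residuals'' fails in the additive form you state it. The paper's argument hinges on the specific bound $V(\xhat(t))\leq R_0\big(1+2\eta+(t-t_{2k+1})\big)\e^{-(t-t_{2k+1})}$ and on the algebraic identity $(1+2\eta+\tstab)\e^{-\tstab}=1-\eta$ encoded in \eqref{E:def_eta}; any completion of your proof must produce a residual of that $\tau\e^{-\tau}$ type (or verify the identity explicitly) rather than the $R_0(1-\e^{-\tstab})$ term your sketch implicitly uses. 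As written, the last two conclusions of the lemma are asserted rather than proved.
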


\begin{proof}
{\it Step 1: observation mode.}
Let us bound 
$|x(t) - x(t_{2k})| $ on $[t_{2k},t_{2k}+\tobs)$. For $t>t_{2k}$,
\[
\begin{aligned}
    |x(t)-x(t_{2k})|
    &
    \leq \int_{t_{2k}}^t |\dot{x}(s)|\diff s
    \leq \int_{t_{2k}}^t |A(0) x(s) |\diff s
    \leq 
    a_0\int_{t_{2k}}^t |x(s)|\diff s 
    \\
    &\leq a_0\int_{t_{2k}}^t |x(s) - x(t_{2k})|\diff s +  
    a_0 |x(t_{2k})|
    (t-t_{2k})
    \\
    &\leq a_0\int_{t_{2k}}^t |x(s) - x_0|\diff s + a_0 \diam \D(R_0)  (t-t_{2k})
\end{aligned}
\]
By Grönwall's inequality, we conclude that 
\[
    |x(t)-x(t_{2k})|\leq \diam \D(R_0) a_0  (t-t_{2k})\e^{ a_0 (t-t_{2k})}.
\]
Assume there exists $t^*=\inf\{t\in [t_{2k},t_{2k}+\tobs):V(x(t^*))=(1+\eta)R_0\}$. By mean value inequality 
\[
    |V(x(t^*)) - V(x(t_{2k}))|
    \leq
    m((1+\eta)R_0)
    |x(t^*)-x(t_{2k})|
\]
and since $t^*<\tobs<\tobsmax$ (see \eqref{E:def_tobsmax}),
\[
    V(x(t^*))
    \leq  R_0 + 
m((1+\eta)R_0)
\diam \D(R_0)  a_0 (t^*-t_{2k})\e^{ a_0 (t^*-t_{2k})} <(1+\eta) R_0.
\]
This proves that $t^*$cannot exist and $V(x(t))\leq(1+\eta)R_0$ for all $t\in [t_{2k},t_{2k}+\tobs]$.


Regarding the followup assertions, we first look at the upper bound on $\eps=\xhat-x$ \eqref{E:error_bound}:
\begin{equation}\label{E:maj_eps_t2k+1+tobs}
|\eps(t_{2k}+\tobs)|
\leq 
\e^{-\frac{\alpha}{2} \tobs}
\sqrt{\frac{S_{\max}(t_{2k})}{S_{\min}(t_{2k}+\tobs)}} |\eps(t_{2k})|.
\end{equation}
With the assumption that 
$x(t_{2k})\in \D(R_0)$, $\xhat(t_{2k})\in \D((1-\eta)R_0)\cup \Khat\subset \D(R_0)\cup \Khat$, we have that $|\eps(t_{2k})|\leq  \diam \left(\D(R_0)\cup \Khat\right)=d_0$.
Furthermore, we know that $S_{\max}(t)<\smax$, and since $u=0$ on $[t_{2k},t_{2k}+\tobs)$,   $S_{\min}(_{2k}+\tobs)\geq \e^{-\alpha T}g_0$. Then
\begin{equation}\label{E:maj_eps_t2k+1+tobs_2}
|\eps(t_{2k}+\tobs)|
\leq 
\e^{-\frac{\alpha}{2} (\tobs-T)}
\sqrt{\frac{\smax}{g_0}} d_0
\end{equation}
Then by assuming that \eqref{E:min_alpha_0} holds, we have $|\eps(t_{2k}+\tobs)| \leq \dist\big(\D((1+\eta)R_0),\D((1+2\eta)R_0)^c\big)$ and thus
$
V(\xhat(t_{2k}+\tobs))\leq (1+2\eta)R_0.
$

{\it Step 2: stabilization mode.}
For all $t\in[t_{2k+1},t_{2k+1}+\tstab)$,
$$
\begin{aligned}
    \eps'(t)S(t)\eps(t)
    &
    \leq 
    \e^{-\beta (t-t_{2k+1})}\eps'(t_{2k+1})S(t_{2k+1})\eps(t_{2k+1})   
    \\
    &
    \leq \e^{-\beta (t-t_{2k+1})}\e^{-\alpha \tobs}\eps'(t_{2k})S(t_{2k})\eps(t_{2k})   
\end{aligned}
$$
We know $S(t_{2k+1})\geq  \e^{-\beta T}G_0(0,T) $, and thus with worst possible exponential decay, we get for $t\in [t_{2k+1},t_{2k+1}+\tstab)$
$$
S(t)\geq \e^{-(\beta+2a_\infty) (t-t_{2k+1}) }\e^{-\alpha T}G_0(0,T).
$$
Hence 
\begin{equation}\label{E:eps_beta}
|\eps(t)|
\leq  
\e^{a_\infty \tstab}
\e^{-\frac{\alpha}{2}(\tobs-T)}
\sqrt{\frac{\smax}{g_0}} d_0
\end{equation}
and
$$
|S^{-1}(t)C'C\varepsilon(t)|
\leq
\e^{(\beta+3 a_\infty )\tstab}\e^{-\frac{\alpha}{2}(\tobs-3T)}\sqrt{\frac{\smax}{g_0^3}}|C|^2 d_0
$$
As a consequence of \eqref{E:min_alpha_0}, we have for all $t\in [t_{2k+1},t_{2k+1}+\tstab)$
$$
m((1+2\eta)R_0)
|S^{-1}(t)C'C\varepsilon(t)|
\leq R_0.
$$
Assume there exists $t^*=\inf\{t\in[t_{2k+1},t_{2k+1}+\tstab)\mid V(\xhat(t))=(1+2\eta)R_0\}$. Then for all $t\in [t_{2k+1},t^*)$,
$$
V(\xhat(t)) -V(\xhat(t_{2k+1}))
\leq  -\int_{t_{2k+1}}^t V(\xhat( s))\diff s+R_0 (t-t_{2k+1})
$$
so that, by Gr\"onwall inequality,
$$
V(\xhat(t))\leq  R_0\left(1+2\eta+ (t-t_{2k+1})\right)\e^{-(t-t_{2k+1})}.
$$
However $\left(1+2\eta+ t-t_{2k+1}\right)\e^{-(t-t_{2k+1})}\leq 1+2\eta$ for all $t\geq t_{2k+1}$, proving that $V(\xhat(t))\leq (1+2\eta)R_0$ for all $t\in[t_{2k+1},t_{2k+1}+\tstab)$ and $t^*$ is not reached. Furthermore, with $\eta$ given by \eqref{E:def_eta}, we get that $V(\xhat(t_{2k+2}+\tstab))\leq (1-\eta)R_0$.

Finally, coming back to \eqref{E:eps_beta}, if $|\eps(t_{2k+2}+\tstab)|\leq \dist(\D((1-\eta)R_0),\D(R_0)^c) $, which is implied by \eqref{E:min_alpha_0}, then $V(x(t_{2k+2}+\tstab))\leq R_0$.
\end{proof}

As a second step in the boundedness proof, we show that on the occasions where the switching conditions is not satisfied as soon as possible, so that $t_{2k+2}>t_{2k+1}+\tstab$, then the usual behaviour of the Kalman like observer holds and the Lyapunov does not grow.

\begin{lem}\label{L:borne_beta}
We define positive constants $K_3, K_4$, depending only on the problem data, $\K\times\Khat\times\S$ and $\tstab$, as follows:
with $d_0'=\diam \left(\D((1+2\eta) R_0)\right)$ and $D'=\dist\big(\D((1-\eta)R_0),\D(R_0)^c\big)$,
$$
K_3=\frac{{D'}^2}{\smax{d_0'}^2}
\quad \text{ and } \quad
K_4=\frac{(1-\eta)^2R_0^2}{\smax{d_0'}^2m((1-\eta)R_0)^2}
$$
Assume $\beta$ large enough so that
\begin{equation}\label{E:min_beta_0}
\e^{-\beta(\tstab-T)}<K_3 \, \gmin
\quad
\text{ and }
\quad 
\e^{-\beta(\tstab-3T)}<K_4 \, \gmin^3. 
\end{equation}
Let $k$ be an integer such that $2k\in [0,\kmax)$. If at $t_{2k+1}$ we have $|\xhat(t_{2k+1})-x(t_{2k+1})|\leq  \diam(\D((1+2\eta)R_0))$, and at $t_{2k+1}+\tstab$ we have 
$V(x(t_{2k+1}+\tstab))\leq R_0$, and $V(\xhat(t_{2k+1}+\tstab))\leq (1-\eta)R_0$
then for all $t\in [t_{2k+1}+\tstab,t_{2k+2}]$,
$$
V(x(t))\leq  R_0
\quad \text{ and } \quad  
V(\xhat(t))\leq (1-\eta)R_0.
$$
\end{lem}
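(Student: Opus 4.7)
The plan is to mirror Step~2 of the proof of Lemma~\ref{L:borne_alpha}, but drawing the lower bound on the dynamic gain from the switching condition rather than from the observability of the null control. Throughout the interval $[t_{2k+1}+\tstab,t_{2k+2}]$ the control is $u=\lambda\circ\xhat$ and the dynamic gain in the Lyapunov differential equation is $\beta$. By definition of $t_{2k+2}$ as an infimum, the switching criterion has not triggered on $(t_{2k+1}+\tstab,t_{2k+2})$, which means $g(t)>\gmin$ there; by continuity one recovers $g(t)\geq \gmin$ at the endpoint. Since $T<\tstab$, for every $t\in[t_{2k+1}+\tstab,t_{2k+2}]$ the sliding window $[t-T,t]$ is contained in the current stabilization mode, so \eqref{E:bound_below_S_obs} applied over $[t-T,t]$ yields the uniform lower bound $S(t)\geq \e^{-\beta T}\gmin\,\Id$.

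With that bound in hand, I would carry out exactly the same two pointwise estimates as in Lemma~\ref{L:borne_alpha}. First, using \eqref{E:error_bound} between $t_{2k+1}$ and $t$, the hypothesis $|\varepsilon(t_{2k+1})|\leq d_0'$ and $S_{\max}(t_{2k+1})\leq \smax$, together with the lower bound above, one gets for every $t\geq t_{2k+1}+\tstab$
\[
|\varepsilon(t)|\leq \e^{-\frac{\beta}{2}(\tstab-T)}\sqrt{\tfrac{\smax}{\gmin}}\,d_0'.
\]
The first condition in \eqref{E:min_beta_0} then forces $|\varepsilon(t)|<D'$. Second, writing $|S^{-1}C'C\varepsilon|\leq |C|^2 S_{\min}^{-1}|\varepsilon|$, plugging in again the bound on $S_{\min}$ and on $|\varepsilon|$ produces a factor $\e^{-\frac{\beta}{2}(\tstab-3T)}$ times data constants, and the second inequality in \eqref{E:min_beta_0} then yields
\[
m((1-\eta)R_0)\,|S^{-1}(t)C'C\varepsilon(t)|\leq (1-\eta)R_0.
\]

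It remains to propagate these two pointwise bounds into Lyapunov bounds on $\xhat$ and $x$. I would run a bootstrap on $V(\xhat)$ by setting $t^\star=\inf\{t\in[t_{2k+1}+\tstab,t_{2k+2}]\mid V(\xhat(t))>(1-\eta)R_0\}$. On $[t_{2k+1}+\tstab,t^\star)$ the inequality $V(\xhat)\leq (1-\eta)R_0$ allows to control the gradient of $V$ by $m((1-\eta)R_0)$, and combining \eqref{eq:lyap} with the bound on $|S^{-1}C'C\varepsilon|$ gives $\frac{\diff}{\diff t}V(\xhat)\leq -V(\xhat)+(1-\eta)R_0$. Since $V(\xhat(t_{2k+1}+\tstab))\leq (1-\eta)R_0$ by assumption, Grönwall's inequality keeps $V(\xhat(t))\leq (1-\eta)R_0$ on the interval, contradicting the definition of $t^\star$. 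Hence $V(\xhat(t))\leq (1-\eta)R_0$ on the whole interval $[t_{2k+1}+\tstab,t_{2k+2}]$, and combining with $|\varepsilon(t)|<D'=\dist(\D((1-\eta)R_0),\D(R_0)^c)$ forces $x(t)\in\D(R_0)$, i.e.\ $V(x(t))\leq R_0$.

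The main (albeit minor) obstacle is making sure that the lower bound $S(t)\geq \e^{-\beta T}\gmin\Id$ is genuinely available on the whole relevant interval, and in particular at the right endpoint $t_{2k+2}$; this is exactly why $T<\tstab$ was enforced in the preliminaries and why $\gmin$ was compared to a strict sup in the definition of $t_{2k+2}$. Everything else is a direct transcription of Step~2 of Lemma~\ref{L:borne_alpha}, with the Gramian bound $g_0$ replaced by $\gmin$, the observation gain $\alpha$ replaced by $\beta$, the diameter $d_0$ replaced by $d_0'$, and the target sub-level $R_0$ replaced by $(1-\eta)R_0$.
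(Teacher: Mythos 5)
Your proposal is correct and follows essentially the same route as the paper: lower-bound $S$ by $\e^{-\beta T}\gmin\Id$ via the switching criterion (the window $[t-T,t]$ staying inside the stabilization mode since $T<\tstab$), derive the error and innovation bounds, and propagate them through $V$. The only cosmetic difference is that the paper shows $\frac{\diff}{\diff t}V(\xhat)<0$ at the first crossing of the level $(1-\eta)R_0$ rather than running your Grönwall bootstrap; the two arguments are interchangeable here since \eqref{E:min_beta_0} gives the innovation bound with strict inequality.
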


\begin{proof}
The lemma is trivial in the case $t_{2k+2}=t_{2k+1}+\tstab$, we assume it is not the case.
Over the interval $[t_{2k+1}+\tstab, t_{2k+2})$, the usual error bound \eqref{E:error_bound} yield
\begin{equation}\label{E:majEpsNoswitch}
|\eps(t)|
\leq 
\e^{-\frac{\beta}{2} (t-t_{2k+1})}
\sqrt{\frac{S_{\max}(t_{2k+1})}{S_{\min}(t)}} |\eps(t_{2k+1})|, \qquad \forall t\in [t_{2k+1}+\tstab, t_{2k+2}).
\end{equation}
Since $g(t)>\gmin$ over $[t_{2k+1}+\tstab, t_{2k+2})$, we have $S_{\min}(t)>\e^{-\alpha T}\gmin$. Then for all $t\in [t_{2k+1}+\tstab, t_{2k+2})$
$$
|\eps(t)|
\leq 
\e^{-\frac{\beta}{2} (t-t_{2k+1}-T)}
\sqrt{\frac{\smax}{\gmin}} |\eps(t_{2k+1})|
$$
and since $|\eps(t_{2k+1})|\leq d_0'$ by assumption,
$$
|S^{-1}C'C\eps(t)|
\leq
\e^{-\frac{\beta}{2}(\tstab- 3T)}
\sqrt{\frac{\smax}{\gmin^3}} d_0'.
$$
Assume there exists a time $t^*\in [t_{2k+1}+\tstab, t_{2k+2}]$ such that $V(\xhat(t))=(1-\eta)R_0$. By definition of the Lyapunov function $V$, we have
$$
\frac{\diff }{\diff t} V(\xhat)
\leq 
-V(\xhat)-
\frac{\partial V}{\partial x}(\xhat) \, S^{-1}C'C\eps.
$$
Assuming \eqref{E:min_beta_0} holds, this implies that at $t=t^*$, 
$$
\frac{\diff }{\diff t} V(\xhat)_{|t=t^*}
\leq 
-(1-\eta)R_0
+
\sup_{\partial \D((1-\eta)R_0)}\left|\frac{\partial V}{\partial x}\right| \e^{-\frac{\beta}{2}(\tstab- 3T)}
\sqrt{\frac{\smax}{\gmin^3}} d_0'<0.
$$
Since $V(\xhat(t_{2k+1}+\tstab))\leq (1-\eta)R_0$, this implies that $t^*=t_{2k+1}+\tstab$ is the only possible value of $t^*$ and $V(\xhat(t))<(1-\eta)R_0$ for all $t\in (t_{2k+1}+\tstab, t_{2k+2})$. This proves the first part of the statement.
In order to have $V(x(t))<R_0$, it is sufficient to have $|\varepsilon(t)|< \dist(\D((1-\eta)R_0), \D(R_0)^c)$ for all $t\in [t_{2k+1}+\tstab, t_{2k+2})$.
Coming back to \eqref{E:majEpsNoswitch}, this bound is true when \eqref{E:min_beta_0} also holds.
\end{proof}

Now we are ready to end the boundedness proof with two corollaries of the previous two lemmas.

\begin{cor}
Assume the inequalities in Lemma \ref{L:borne_alpha} and \ref{L:borne_beta} hold. Then the dynamic gain matrix $S$ is bounded above and below. We denote the below bound $0<\smin\Id\leq S(t)$.
\end{cor}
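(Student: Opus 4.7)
The plan is to separate the two directions, noting that the upper bound is essentially already in hand and only the lower bound requires genuine work via a partition of the time axis along the switching sequence.

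For the upper bound, in the preliminaries we already fixed $\ubar{\alpha}$ and $\ubar{\beta}$ so that $\min(\ubar{\alpha},\ubar{\beta})>2a_F$ and $\sup_\S\tr(S)>|C|^2/(\min(\ubar{\alpha},\ubar{\beta})-2a_F)$. Plugging these choices into \eqref{E:majS} immediately gives a constant $\smax>0$, independent of $\alpha,\beta,k$, such that $S_{\max}(t)<\smax$ for every $t\in\R_+$ and every trajectory starting in $\K\times\Khat\times\S$.

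For the lower bound, I would subdivide each interval $[t_{2k},t_{2k+1})$ and $[t_{2k+1},t_{2k+2})$ at the point lying at distance $T$ past the start of the mode, and treat the first (short) sub-interval with the worst-case propagation \eqref{E:bound_below_S_unobs} and the second with the Gramian bound \eqref{E:bound_below_S_obs}. Concretely, on an observation mode $[t_{2k},t_{2k+1})$ the control is $u\equiv 0$: for $t\in[t_{2k},t_{2k}+T]$ the bound \eqref{E:bound_below_S_unobs} with $a_\infty$ replaced by $a_0=\|A(0)\|$ propagates $S_{\min}(t_{2k})$ forward, while for $t\in[t_{2k}+T,t_{2k+1}]$ the autonomy of the dynamics yields $G_0(t-T,t)\geq g_0\Id$, hence $S(t)\geq \e^{-\alpha T}g_0\,\Id$ by \eqref{E:bound_below_S_obs}. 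At the junction $t_{2k+1}$ we thus have a uniform positive lower bound on $S_{\min}$. On a stabilization mode the same scheme applies: \eqref{E:bound_below_S_unobs} with gain $\beta$ and $a_\infty$ carries the bound across $[t_{2k+1},t_{2k+1}+T]$, and on $[t_{2k+1}+T,t_{2k+2})$ the Gramian bound applies. On $[t_{2k+1}+\tstab,t_{2k+2})$ the switching criterion gives directly $g(t)>\gmin$, while on $[t_{2k+1}+T,t_{2k+1}+\tstab]$ Lemmas~\ref{L:borne_alpha} and \ref{L:borne_beta} guarantee $V(\xhat)\leq (1+2\eta)R_0$, so the choice of $\bar g_{\min}$ from the preliminaries forces $G_{\lambda\circ\xhat}(t-T,t)>\bar g_{\min}\Id>\gmin\Id$; in either case \eqref{E:bound_below_S_obs} yields $S(t)\geq \e^{-\beta T}\gmin\,\Id$.

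Taking $\smin$ to be the minimum of $\e^{-\alpha T}g_0$, $\e^{-\beta T}\gmin$ and of the exponentially weighted propagations of $\inf_\S S_{\min}(S_0)$ over the first window of length $T$ (finite constants, all positive), one obtains a uniform $\smin>0$ with $\smin\Id\leq S(t)$ for all $t\geq 0$ along every trajectory. The only subtlety is bookkeeping: making sure that at every switching instant the induction can be restarted from a uniform lower bound, which is ensured because the bound at the end of each mode depends only on the fixed data $g_0$ or $\gmin$, not on the previous value of $S_{\min}$. I do not expect any real obstacle here beyond careful enumeration of the sub-intervals; the result is a direct consequence of \eqref{E:majS}, \eqref{E:bound_below_S_obs}, \eqref{E:bound_below_S_unobs}, and the trajectory bounds already proved in Lemmas~\ref{L:borne_alpha} and \ref{L:borne_beta}.
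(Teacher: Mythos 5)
Your treatment of the upper bound and of the observation modes matches the paper's, but there is a genuine gap in your handling of the stabilization mode on the sub-interval $[t_{2k+1}+T,\,t_{2k+1}+\tstab]$. You claim that since $V(\xhat)\leq(1+2\eta)R_0$ there, the choice of $\bar{g}_{\min}$ forces $G_{\lambda\circ\xhat}(t-T,t)>\bar{g}_{\min}\Id$. This is false: $\bar{g}_{\min}$ is chosen in the preliminaries so that $G_{\lambda\circ\xhat}(0,T)>\bar{g}_{\min}\Id$ only for $V(\xhat)\leq R_\infty$, where $R_\infty$ is the \emph{small} radius produced by Lemma~\ref{lem:R_infty_exists} through a weak-$*$ compactness argument around the null control. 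On the large sublevel set $\D((1+2\eta)R_0)$ the feedback $\lambda(\xhat)$ can perfectly well be a singular control --- the system is not uniformly observable, which is the entire premise of the paper, and it happens in the numerical example when $\hat x_1$ crosses $1$ --- so no lower bound on the Gramian is available during the guaranteed-length portion of the stabilization mode; the switching criterion is deliberately inactive before $t_{2k+1}+\tstab$. (If your claim were true, the Gramian could never fall below $\gmin$ and the system would never switch back to observation mode.) Consequently your coverage of the stabilization mode leaves a hole of length $\tstab-T$ in which neither of your two bounds applies.

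The repair is to cut the stabilization mode at $t_{2k+1}+\tstab$ rather than at $t_{2k+1}+T$: propagate the value $S(t_{2k+1})\geq\e^{-\alpha T}G_0(0,T)$ inherited from the observation mode across all of $[t_{2k+1},t_{2k+1}+\tstab]$ using the worst-case decay \eqref{E:bound_below_S_unobs}, which costs a factor $\e^{-(\beta+2a_\infty)\tstab}$ and is harmless since $\tstab$ is fixed, and only then invoke the switching criterion $g(t)>\gmin$ on $[t_{2k+1}+\tstab,t_{2k+2})$. With that correction, your bookkeeping --- taking $\smin$ as the minimum over finitely many uniform constants and restarting the induction at each switching time from a bound depending only on $g_0$, $\gmin$ and the fixed parameters --- goes through and coincides with the paper's proof.
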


\begin{proof}
We discussed the above bound $\smax$ in the preliminaries. We focus on the lower bound.
Let $k\geq 0$. We know that $S(t_{2k+1})\geq \e^{-\alpha T}G_0(0,T)$. Then with worst possible exponential decay,
$$
S(t)\geq \e^{-(\beta+2 a_\infty)\tstab}\e^{-\alpha T}G_0(0,T), \qquad \forall t\in [t_{2k+1},t_{2k+1}+\tstab].
$$
If $g(t)>\gmin$, we do not switch right away, and we are able to say that
$$
S(t)\geq \e^{-\alpha T}\gmin \Id, \qquad \forall t\in [t_{2k+1}+\tstab,t_{2k+2}].
$$
Assuming there exists $t\geq t_{2k}+\tstab$ such that $g(t)=\gmin$, then we know that at $t_{2k+1}$, whatever the past, 
$$
S_{\min}(t_{2k+1})\geq \min
\left(
    \e^{-\alpha T}\gmin,
    \e^{-(\alpha+2 a_\infty)\tstab}\e^{-\beta T}g_0
\right).
$$
Then 
$$
S_{\min}(t)\geq \e^{-\beta T} \min
\left(
    \e^{-\alpha T}\gmin,
    \e^{-(\alpha+2 a_\infty)\tstab}\e^{-\beta T}g_0
\right), 
\qquad \forall t\in [t_{2k+1},t_{2k+1}+T].
$$
Then for times larger than $t_{2k+1}+T$, we can rely on the fact that $u=0$ on $[t_{2k+1},t_{2k+2})$, implying that 
$$
S(t)\geq \e^{-\beta T}G_0(0,T),
\qquad \forall t\in [t_{2k+1}+T,t_{2k+2}].
$$
As a result, 
$$
S_{\min}(t)\geq \e^{-\beta T} \min
\left(
    \e^{-\alpha T}\gmin,
    \e^{-(\alpha+2 a_\infty)\tstab}\e^{-\beta T}g_0
\right), 
\qquad \forall t\in [t_{2k},t_{2k+2}].
$$
Hence the existence of $\smin\Id<S(t)$ for all $t\in \R^+$, independent of $S(t_0)$.
\end{proof}

We have the following conclusion, which shows how to choose $\alpha$ and $\beta$ in the rest of the proof.

\begin{cor}
Let $K_1,K_2,K_3,K_4$ be the constants defined in Lemmas \ref{L:borne_alpha} and \ref{L:borne_beta}.
Let $\ubar{\beta}>0$ be such that
$$
\e^{-\ubar{\beta}(\tstab-T)}<K_3 \, \gmin
\quad
\text{ and }
\quad 
\e^{-\ubar{\beta}(\tstab-3T)}<K_4 \, \gmin^3 
$$
and fix $\beta>\ubar{\beta}$.
Let $\ubar{\alpha}>0$ be such that
$$
\e^{-\ubar{\alpha}(\tobs-T)}<K_1 \, g_0,
\quad
\text{ and }
\quad 
\e^{-\ubar{\alpha}(\tobs-3T)}<K_2 \, g_0^3 \, \e^{-2\beta \tstab}.
$$
Then, for all $\alpha>\ubar{\alpha}$,
all trajectories of \eqref{eq:switch} with initial conditions in $\K\times\Khat\times\S\subset\D\times\R^n\times\sym$ remain in a compact subset of $\D\times\R^n\times\sym$ over $[0,+\infty)$.
\end{cor}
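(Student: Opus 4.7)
The plan is to run an induction over the switching index $k$, chaining the bounds of Lemma~\ref{L:borne_alpha} across each observation-to-early-stabilization window with those of Lemma~\ref{L:borne_beta} over the remainder of each stabilization interval, and to control $S$ by combining the preliminary bound~\eqref{E:majS} with the preceding corollary.

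First I would check that the hypotheses of both lemmas hold under the chosen gains. By construction, $\beta>\ubar{\beta}$ yields exactly~\eqref{E:min_beta_0}, so Lemma~\ref{L:borne_beta} applies; then, with $\beta$ fixed, $\alpha>\ubar{\alpha}$ yields exactly~\eqref{E:min_alpha_0}, so Lemma~\ref{L:borne_alpha} applies as well. The order in which the two gains are chosen matters, because the second condition in~\eqref{E:min_alpha_0} contains a factor $\e^{-2\beta\tstab}$ and therefore depends on $\beta$.

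Next I would prove, by induction on $k$ with $2k<\kmax$, the invariant that $V(x(t_{2k}))\leq R_0$, complemented with $\xhat(0)\in\Khat$ if $k=0$ and $V(\xhat(t_{2k}))\leq (1-\eta)R_0$ if $k\geq 1$. The base case is the hypothesis $(x_0,\xhat_0)\in\K\times\Khat\subset\D(R_0)\times\Khat$. For the inductive step, Lemma~\ref{L:borne_alpha} supplies the intermediate bounds $V(x(t))\leq(1+\eta)R_0$ on $[t_{2k},t_{2k}+\tobs]$ and $V(\xhat(t))\leq(1+2\eta)R_0$ on $[t_{2k+1},t_{2k+1}+\tstab]$, together with the endpoint values $V(x(t_{2k+1}+\tstab))\leq R_0$ and $V(\xhat(t_{2k+1}+\tstab))\leq(1-\eta)R_0$. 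If $t_{2k+2}=t_{2k+1}+\tstab$, the invariant at $t_{2k+2}$ is immediate. Otherwise, the error estimate~\eqref{E:eps_beta} from Lemma~\ref{L:borne_alpha}, combined with $x(t_{2k+1}),\xhat(t_{2k+1})\in\D((1+2\eta)R_0)$, gives the missing hypothesis $|\eps(t_{2k+1})|\leq\diam(\D((1+2\eta)R_0))$ of Lemma~\ref{L:borne_beta}, which then propagates $V(x(t))\leq R_0$ and $V(\xhat(t))\leq(1-\eta)R_0$ all the way to $t_{2k+2}$.

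Combining the two lemmas cycle by cycle yields $V(x(t))\leq(1+\eta)R_0$ and $V(\xhat(t))\leq(1+2\eta)R_0$ for every $t\geq 0$, with uniform bounds on $|\eps(t)|$, so $(x,\xhat)$ remains in a compact subset of $\D\times\R^n$ thanks to the properness of $V$ on $\D$. The upper bound $S(t)\leq\smax\Id$ from the preliminaries and the lower bound $\smin\Id\leq S(t)$ from the preceding corollary then confine $S$ to a compact subset of $\sym$. The main obstacle I anticipate is the bookkeeping needed to match the hypothesis of Lemma~\ref{L:borne_beta} at $t_{2k+1}$ --- in particular the bound on $|\eps(t_{2k+1})|$ --- with the estimate produced at that time by Lemma~\ref{L:borne_alpha}; once this matching is done carefully, the induction closes and the corollary follows.
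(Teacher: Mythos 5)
Your proposal is correct and matches the paper's intent exactly: the paper states this corollary without a written proof, treating it as the immediate consequence of chaining Lemma~\ref{L:borne_alpha} and Lemma~\ref{L:borne_beta} inductively over the observation--stabilization cycles, with the $S$-bounds supplied by \eqref{E:majS} and the preceding corollary. Your explicit induction, including the remark that $\ubar{\alpha}$ must be chosen after $\beta$ because of the $\e^{-2\beta\tstab}$ factor in \eqref{E:min_alpha_0}, is precisely the argument the authors leave implicit (note only that the hypothesis $|\eps(t_{2k+1})|\leq\diam(\D((1+2\eta)R_0))$ already follows from $x(t_{2k+1}),\xhat(t_{2k+1})\in\D((1+2\eta)R_0)$ alone, without invoking \eqref{E:eps_beta}).
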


Now that all parameters of the system have been fixed, it remains to investigate the attractivity and stability of the system at the target point.

\subsection{Attractivity}

\begin{lem}\label{lem:eps_conv}
The estimation error $\varepsilon(t)$ tends to $0$ as $t$ tends to infinity.
\end{lem}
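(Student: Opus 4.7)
The plan is to leverage the Lyapunov quantity $W(t)=\varepsilon(t)'S(t)\varepsilon(t)$. As recalled in the paragraph preceding \eqref{E:error_bound}, on each mode where the gain $\theta(t)\in\{\alpha,\beta\}$ is constant, $W$ satisfies $\dot W\leq -\theta(t)W$. Since the construction of the trajectory ensures that $\varepsilon$ and $S$ are continuous at every switching time (only $\theta$ is discontinuous there), $W$ is itself continuous and non-increasing along the trajectory. Combined with the uniform lower bound $S(t)\geq \smin\Id$ provided by the preceding corollary, it will suffice to show $W(t)\to 0$ to conclude, since $|\varepsilon(t)|^2\leq W(t)/\smin$.

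I would then split the analysis according to whether the number of switches is finite or infinite. If $\kmax<+\infty$, the construction forces the terminal interval to be a stabilization mode $[t_{\kmax-2},+\infty)$ with gain $\beta$ (an observation mode, having fixed length $\tobs$, cannot be terminal), on which
\[
W(t)\leq \e^{-\beta(t-t_{\kmax-2})}W(t_{\kmax-2})\to 0
\quad \text{as } t\to+\infty.
\]
If $\kmax=+\infty$, I would instead iterate the decay over complete observation--stabilization cycles. On each observation mode $[t_{2k},t_{2k+1}]$ of length exactly $\tobs$, $W(t_{2k+1})\leq \e^{-\alpha\tobs}W(t_{2k})$; on each stabilization mode $[t_{2k+1},t_{2k+2}]$ of length at least $\tstab$, $W(t_{2k+2})\leq \e^{-\beta\tstab}W(t_{2k+1})$. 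Composing these estimates yields
\[
W(t_{2k})\leq \e^{-k(\alpha\tobs+\beta\tstab)}W(0)\to 0,
\]
and since $t_{2k+2}-t_{2k}\geq \tobs+\tstab$ so that $(t_{2k})$ diverges, the monotonicity of $W$ delivers $W(t)\to 0$ as $t\to+\infty$.

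The main obstacle in this argument, namely a uniform positive lower bound on $S$, has already been resolved by the preceding boundedness corollary, so that no further control of the Gramian is needed at this stage. Everything else reduces to the classical exponential decay of $\varepsilon'S\varepsilon$ on each mode, carefully accounted for across switches; in particular, no subtle interaction between the two modes has to be invoked here, since the inequality $\dot W\leq -\theta(t)W$ is insensitive to the choice of input.
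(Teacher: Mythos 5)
Your proposal is correct and follows essentially the same route as the paper: exponential decay of the Lyapunov quantity $\varepsilon'S\varepsilon$ on each mode, continuity across switches, and the uniform bounds $\smin\Id\leq S\leq\smax\Id$ to convert this into decay of $|\varepsilon|$. The paper avoids your case split on $\kmax$ by simply chaining the per-mode estimates with the single rate $\theta=\min(\alpha,\beta)$, obtaining $\varepsilon'(t)S(t)\varepsilon(t)\leq\e^{-\theta t}\,\varepsilon'(0)S(0)\varepsilon(0)$ for all $t$, but this is a presentational difference only.
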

\begin{proof}
Let $k\in \N$. For any $t\in [t_{2k},t_{2k+1}]$,
$$
|\eps'(t)S(t)\eps(t)|\leq \e^{-\alpha (t-t_{2k})}|\eps'(t_{2k})S(t_{2k})\eps(t_{2k})|.
$$
Likewise, for any $t\in [t_{2k+1},t_{2k+2}]$,
$$
|\eps'(t)S(t)\eps(t)|\leq \e^{-\beta (t-t_{2k+1})}|\eps'(t_{2k+1})S(t_{2k+1})\eps(t_{2k+1})|.
$$
As a consequence, with $\theta =\min(\alpha,\beta)$, we get
$$
|\eps'(t)S(t)\eps(t)|\leq \e^{-\theta t}|\eps'(0)S(0)\eps(0)|.
$$
Then $|\eps(t)|\leq \e^{-\theta t} \sqrt{\smax/\smin} |\varepsilon(0)|$, which proves the result.
\end{proof}

Now, let us prove the convergence of $(\xhat, x)$ towards $0$ by investigating the behaviour of $V(\xhat)$ and $V(x)$ over the successive observation and stabilization modes.
Recall the notations $m(R)=\displaystyle\sup_{\D(R)}\left|\dfrac{\partial V}{\partial x}\right|$, and $\bar{m}=m((1+2\eta)R_0)$.

\begin{lem}\label{L:V_small_increase}
If there exist $R\in (0,R_0)$, $k\in \N$ and $\tau\in[t_{2k}, t_{2k+1})$ such that  $V(\xhat(\tau))\leq R$ and $|S^{-1}(t)C'C\eps(t)|\leq R$ for all $t\in [\tau, t_{2k+1}]$, then $V(\xhat(t))< (1+\eta) R$ for all $t\in [\tau, t_{2k+1}]$.
\end{lem}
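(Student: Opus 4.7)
The plan is to adapt Step~1 of Lemma~\ref{L:borne_alpha} to $\xhat$ in place of $x$. During observation mode $[t_{2k},t_{2k+1})$ the observer satisfies
$$
\dot{\xhat}=A(0)\xhat - S^{-1}C'C\eps,
$$
so the only change compared to the $x$--equation is the presence of the correction term $-S^{-1}C'C\eps$, whose norm is bounded by $R$ on $[\tau,t_{2k+1}]$ by hypothesis. The argument will combine a Grönwall estimate on $|\xhat(t)-\xhat(\tau)|$ with the mean value inequality applied to $V$, and will conclude using precisely the condition built into the definition of $\tobsmax$ in \eqref{E:def_tobsmax}, which is a supremum over $R\in(0,R_0]$.

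More concretely, I would argue by contradiction: assume the existence of
$
t^*=\inf\{t\in[\tau,t_{2k+1}]\mid V(\xhat(t))=(1+\eta)R\}.
$
Then $\xhat(t)\in \D((1+\eta)R)$ for all $t\in[\tau,t^*]$, hence $|\partial V/\partial x|$ is bounded by $m((1+\eta)R)$ along the trajectory. Since $V(0)=0$ we have $0\in\D(R)$ and $|\xhat(\tau)|\leq \diam\D(R)$. Using the observer dynamics and the assumed bound $|S^{-1}C'C\eps(s)|\leq R$, one gets
$$
|\xhat(t)-\xhat(\tau)|
\leq \int_\tau^t \bigl(a_0|\xhat(s)-\xhat(\tau)| + a_0\diam\D(R) + R\bigr)\diff s,
$$
and Grönwall's inequality (as in Step~1 of Lemma~\ref{L:borne_alpha}) yields
$$
|\xhat(t)-\xhat(\tau)|
\leq \bigl(a_0\diam\D(R)+R\bigr)(t-\tau)\e^{a_0(t-\tau)}.
$$

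Multiplying by $m((1+\eta)R)$ and applying the mean value inequality gives
$$
V(\xhat(t^*))\leq V(\xhat(\tau))+ m((1+\eta)R)\bigl(a_0\diam\D(R)+R\bigr)(t^*-\tau)\e^{a_0(t^*-\tau)}.
$$
Since $V(\xhat(\tau))\leq R$, $R\in(0,R_0]$, and $t^*-\tau\leq \tobs<\tobsmax$, the defining property \eqref{E:def_tobsmax} of $\tobsmax$ (whose supremum over $R\in(0,R_0]$ is the whole point of its construction) provides
$
\frac{m((1+\eta)R)}{R}\bigl(a_0\diam\D(R)+R\bigr)(t^*-\tau)\e^{a_0(t^*-\tau)}<\eta,
$
so $V(\xhat(t^*))<(1+\eta)R$, contradicting the definition of $t^*$. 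Hence no such $t^*$ exists, and $V(\xhat(t))<(1+\eta)R$ for all $t\in[\tau,t_{2k+1}]$.

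There is no real obstacle: the lemma is essentially a repackaging of the observation-mode estimate already used in Lemma~\ref{L:borne_alpha}, with the extra correction term controlled by the new hypothesis $|S^{-1}C'C\eps(t)|\leq R$, and with $R_0$ replaced by an arbitrary $R\in(0,R_0)$, which is precisely what the uniform-in-$R$ form of $\tobsmax$ allows.
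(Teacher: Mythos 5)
Your proof is correct and follows essentially the same route as the paper's: a Grönwall estimate on $|\xhat(t)-\xhat(\tau)|$ using the hypothesis $|S^{-1}C'C\eps|\leq R$ to absorb the correction term, followed by the mean value inequality for $V$ and a contradiction at the first time $t^*$ where $V(\xhat)$ would reach $(1+\eta)R$, concluded by the uniform-in-$R$ property built into the definition of $\tobsmax$ in \eqref{E:def_tobsmax}. Your added justification that $0\in\D(R)$ (so $|\xhat(\tau)|\leq\diam\D(R)$) makes explicit a step the paper leaves implicit.
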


\begin{proof}
The assumptions imply
\[
\begin{aligned}
    |\xhat(t)-\xhat(\tau)|
    \leq \int_{\tau}^t |\dot{\xhat}(s)|\diff s
    &
    \leq \int_{\tau}^t |A(0) \xhat(s)|\diff s +(t-\tau) R
    \\
    &\leq a_0\int_{\tau}^t |\xhat(s) - \xhat(\tau)|\diff s + \left(a_0 \diam \D(R)+ R\right) (t-\tau)
\end{aligned}
\]
Then we rely on Grönwall's inequality arguments. We have
\[
    |x(t)-x(\tau)|\leq\left(a_0  \diam \D(R) +R\right)(t-\tau)\e^{ a_0 (t-\tau)}.
\]
Assume there exists $t\in(\tau,t_{2k}+\tobs)$ such that $V(\xhat(t))=(1+\eta)R$, and let $t^*=\inf\{t>\tau\mid V(\xhat(t))=(1+\eta)R\}$. Then 
\[
V(\xhat(t^*))\leq R+ 
m((1+\eta)R)
\left(a_0  \diam \D(R) + R\right)\tobs\e^{ a_0 \tobs}.
\]
However, this is impossible for any given $\tobs<\tobsmax$ by definition of  $\tobsmax$ (see \eqref{E:def_tobsmax}).
Hence $t^*$ cannot exist and $V(\xhat(t_{2k+1}))<(1+\eta)R$.
\end{proof}

\begin{lem}\label{L:V_decrease}
If there exists $k\in \N$ such that  $V(\xhat(\tau))=(1+\eta)R$ for some $R\in (0,R_0)$, $\tau\in[t_{2k+1},t_{2k+2})$, and $\bar{m} |S^{-1}(t)C'C\eps(t)|\leq R$ for all $t\geq \tau$, then 
$$
V(\xhat(t))<R (1+\eta +(t-\tau))\e^{-(t-\tau)}, \qquad \forall t \in [\tau,t_{2k+2}).
$$
In particular, $s\mapsto(1+\eta +s)\e^{-s}$ is a decreasing function over $\R_+$ and $\kappa:=(1+\eta +\tstab)\e^{-\tstab}<1$.
\end{lem}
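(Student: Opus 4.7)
Since $\tau \in [t_{2k+1}, t_{2k+2})$, the $\xhat$-dynamics are those of the stabilization mode \eqref{E:system_u=lambda}. I would begin by writing
\begin{equation*}
\dot V(\xhat) = \frac{\partial V}{\partial x}(\xhat)\bigl(A(\lambda(\xhat))\xhat + B(\lambda(\xhat))\bigr) - \frac{\partial V}{\partial x}(\xhat)\, S^{-1}C'C\eps,
\end{equation*}
and combine the Lyapunov property \eqref{eq:lyap} on the first term with the bound $|\partial V/\partial x(\xhat)|\leq \bar m$ (valid as long as $V(\xhat) \leq (1+2\eta)R_0$) and the hypothesis $\bar m\, |S^{-1}C'C\eps| \leq R$ on the perturbation term. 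This yields the scalar differential inequality $\dot V(\xhat) \leq -V(\xhat) + R$.

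Integrating from $\tau$ to $t$ gives
\begin{equation*}
V(\xhat(t)) - V(\xhat(\tau)) \leq -\int_\tau^t V(\xhat(s))\,\diff s + R(t-\tau),
\end{equation*}
and applying the Grönwall-style step already used in Lemma~\ref{L:borne_alpha} with initial value $V(\xhat(\tau)) = (1+\eta)R$ gives the target bound $V(\xhat(t)) \leq R(1+\eta+(t-\tau))\e^{-(t-\tau)}$. The consistency of this estimate with the hypothesis $V(\xhat)\leq (1+2\eta)R_0$ (needed to apply the gradient bound) is secured by a continuity argument of the same flavour as the $t^*$ arguments in Lemmas~\ref{L:borne_alpha} and \ref{L:borne_beta}: setting $t^* = \inf\{t > \tau \mid V(\xhat(t)) = (1+2\eta)R_0\}$, the above bound applied on $[\tau, t^*)$ yields $V(\xhat(t)) \leq R(1+\eta) \leq R_0(1+\eta) < (1+2\eta)R_0$, contradicting the definition of $t^*$.

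For the trailing assertions, the derivative $f'(s) = -(\eta+s)\e^{-s}$ of $f(s):=(1+\eta+s)\e^{-s}$ is strictly negative on $\R_+$, so $f$ is strictly decreasing with maximum value $f(0)=1+\eta$. To obtain $\kappa = f(\tstab) < 1$, one needs $1+\eta+\tstab < \e^{\tstab}$; definition \eqref{E:def_eta} gives $\eta(2+\e^{\tstab}) = \e^{\tstab}-1-\tstab$, and since $2+\e^{\tstab}>1$ we deduce $\eta < \e^{\tstab}-1-\tstab$, which rearranges to the required inequality.

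The main technical point is the Grönwall step: because $R$ enters as a constant forcing term (and not as one that decays in time), the bootstrap must be set up so that $V(\xhat)$ never leaves the region where the gradient bound $\bar m = m((1+2\eta)R_0)$ is available, which is what enables the differential inequality to close. The careful choice of $\eta$ in \eqref{E:def_eta} is precisely what makes $\kappa<1$, and this contraction factor is what will later power the Lyapunov decrease over each observation--stabilization cycle in the attractivity proof.
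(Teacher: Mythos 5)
Your argument follows the paper's proof almost line for line: the same differential inequality $\dot V(\xhat)\leq -V(\xhat)+R$ obtained from \eqref{eq:lyap}, the gradient bound $\bar m$ and the hypothesis on the perturbation, the same integral form and Grönwall step, and the same verification that $\kappa<1$ from \eqref{E:def_eta}. The one real difference is how the validity of the gradient bound $\bar m=m((1+2\eta)R_0)$ is secured: the paper invokes Lemma~\ref{L:borne_alpha} to assert $V(\xhat)<(1+2\eta)R_0$ on all of $[t_{2k+1},t_{2k+2})$, whereas you close the loop with a self-contained bootstrap on $t^*=\inf\{t>\tau\mid V(\xhat(t))=(1+2\eta)R_0\}$. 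Your version is arguably cleaner, since Lemma~\ref{L:borne_alpha} as stated only controls the interval up to $t_{2k+1}+\tstab$ and under hypotheses at $t_{2k}$ that the present lemma does not assume.

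However, the Grönwall step --- in your write-up exactly as in the paper's --- does not deliver the stated conclusion, and you in fact put your finger on the problem when you note that ``$R$ enters as a constant forcing term (and not as one that decays in time).'' From $\dot V(\xhat)\leq -V(\xhat)+R$ with $V(\xhat(\tau))=(1+\eta)R$, the integrating factor $\e^{t}$ gives
\begin{equation*}
V(\xhat(t))\ \leq\ \e^{-(t-\tau)}V(\xhat(\tau))+R\left(1-\e^{-(t-\tau)}\right)\ =\ R\left(1+\eta\,\e^{-(t-\tau)}\right),
\end{equation*}
which tends to $R$, not to $0$. The claimed bound $R(1+\eta+(t-\tau))\e^{-(t-\tau)}$ agrees with this at $t=\tau$ but decays strictly faster for all $t>\tau$, so it is strictly stronger and cannot follow from the differential inequality: the equality case $\dot V=-V+R$ already violates it. The bound you want would follow if the forcing term were exponentially decaying, i.e., if the hypothesis read $\bar m\,|S^{-1}(t)C'C\eps(t)|\leq R\,\e^{-(t-\tau)}$, for then $\frac{\diff}{\diff t}\bigl(\e^{t-\tau}V(\xhat(t))\bigr)\leq R$ and the conclusion is immediate. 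Such a decaying bound is plausible in context (the error $\eps$ does decay exponentially in stabilization mode), but neither your proof nor the paper's establishes it, and the constant-forcing version genuinely fails. This is not cosmetic: the contraction factor $\kappa<1$ over an observation--stabilization cycle, which is exactly what Corollary~\ref{cor:conv_etat} extracts from this lemma, is lost if the bound saturates at $R$ instead of decaying. A fix should either strengthen the hypothesis to a decaying perturbation bound or restate the conclusion as $V(\xhat(t))\leq R(1+\eta\,\e^{-(t-\tau)})$ and rework the downstream contraction argument.
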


\begin{proof}
By Lemma~\ref{L:borne_alpha}, we know $V(\xhat(t))<(1+2\eta)R_0$ on $[t_{2k+1},t_{2k+2})$. Then the assumptions imply for all $t\in [\tau,t_{2k+2})$
\[
    V(\xhat(t))-V(\xhat(\tau))
    \leq -\int_{\tau}^t V(\xhat(s))\diff s + (t-\tau) R.
\]
Then Grönwall's inequality implies for all $t\in [\tau,t_{2k+2})$
\[
    V(\xhat(t))\leq R(1+\eta +(t-\tau))\e^{-(t-\tau)}.
\]
Furthermore,
by definition of $\eta$, 
$$
\kappa=(1+\eta+\tstab)\e^{-\tstab}
=
\frac{\e^{-\tstab} \left(1+\tstab+\e^{\tstab} (\tstab+2)\right)}{\e^{\tstab}+2}
<1.
$$
Hence the statement.
\end{proof}

\begin{lem}\label{lem:R_infty_exists}
Under Assumption~\ref{ass:0_observable}, for all $T>0$, for all $\gmin>0$ such that $\gmin \Id < G_0(0,T)$, there exists $R_\infty>0$ such that $V(\xhat)_{|[0,T]} \leq R_\infty$ implies $\gram_{\lambda\circ \xhat}(0,T)>\gmin\Id$.
\end{lem}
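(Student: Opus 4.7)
My plan is to exploit the continuity of the Gramian map $u\mapsto G_u(0,T)$ in the control, together with the fact that a small sublevel set $\{V\leq R_\infty\}$ forces $\lambda\circ\xhat$ to be uniformly small (this last point uses $\lambda(0)=0$, which is implicit in the paper as observed in the preliminaries: "the equilibrium trajectory $\xhat\equiv 0$ leads to $\lambda\circ\xhat=0$"). Since $G_0(0,T)>\gmin \Id$ strictly by hypothesis, a small $L^\infty$-neighborhood of the zero control will preserve the inequality, and it only remains to translate a pointwise bound on $V(\xhat)$ into an $L^\infty$ bound on $\lambda\circ\xhat$.

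The first step I would carry out is the continuity of the Gramian in the control. Using the ODE $\frac{\partial}{\partial t}\Phi_u(t,s)=-\Phi_u(t,s)A(u(t))$ together with local Lipschitzness of $A$, two controls $u,v\in L^\infty([0,T],\R)$ lying in a common bounded set and satisfying $\|u-v\|_\infty<\sigma$ yield transition matrices whose difference $\Phi_u-\Phi_v$ obeys a Grönwall-type estimate, forcing $\sup_{[0,T]^2}\|\Phi_u-\Phi_v\|$ to be $O(\sigma)$. The integral formula $G_u(0,T)=\int_0^T \Phi_u(T,s)'C'C\,\Phi_u(T,s)\diff s$ then gives continuity of $u\mapsto G_u(0,T)$ in operator norm, and hence continuity of the smallest eigenvalue of $G_u(0,T)$ as a function of $u$. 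In particular, there exists $\delta>0$ such that every $u\in L^\infty([0,T],\R)$ with $\|u\|_\infty<\delta$ satisfies $G_u(0,T)>\gmin\Id$.

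The second step is to arrange $\|\lambda\circ\xhat\|_\infty<\delta$ from a bound on $V(\xhat)$. Since $V$ is proper and continuous with $V(0)=0$, the sublevel set $\D(R)=\{V\leq R\}$ is compact and shrinks to $\{0\}$ as $R\to 0$. Combined with continuity of $\lambda$ and $\lambda(0)=0$, this gives $\sup_{\D(R)}|\lambda|\to 0$ as $R\to 0$. Choosing $R_\infty>0$ small enough so that $\sup_{\D(R_\infty)}|\lambda|<\delta$ then ensures that whenever $V(\xhat(t))\leq R_\infty$ for all $t\in[0,T]$, the control $\lambda\circ\xhat$ lies in the $L^\infty$-neighborhood isolated in step one, so $G_{\lambda\circ\xhat}(0,T)>\gmin\Id$.

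The only subtle point I expect is to track the topology carefully in step one: the pointwise bound $V(\xhat)\leq R_\infty$ gives a uniform bound on $\xhat$ (by compactness of the sublevel set), which in turn yields an $L^\infty$ — not merely $L^1$ — bound on $\lambda\circ\xhat$, so $L^\infty$ continuity of the Gramian is exactly what is needed. Everything else reduces to standard ODE dependence-on-parameter estimates and the properness of the Lyapunov function.
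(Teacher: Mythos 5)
Your proof is correct, and its overall architecture matches the paper's: isolate an $L^\infty$-neighbourhood of the null control on which $G_u(0,T)>\gmin\Id$ persists, then shrink the sublevel set of $V$ until $\lambda\circ\xhat$ is forced into that neighbourhood (both arguments implicitly use $\lambda(0)=0$, which you rightly flag as the content of the remark ``the equilibrium trajectory $\xhat\equiv 0$ leads to $\lambda\circ\xhat=0$''). The difference lies entirely in how the first step is justified. The paper invokes continuity of $u\mapsto G_u(0,T)$ for the weak-$*$ topology on $L^\infty([0,T],\R)$ together with Banach--Alaoglu compactness of norm balls; you instead prove norm-continuity directly, via a Gr\"onwall estimate on $\Phi_u-\Phi_v$ obtained from the local Lipschitzness of $A$. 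Your route is more elementary and self-contained, and it suffices here precisely because the controls in question converge to $0$ \emph{in norm}, not merely weakly-$*$. What the weak-$*$/compactness argument buys in general is a uniform lower bound on the Gramian over a ball of controls of \emph{fixed} radius (where one must extract a convergent subsequence rather than rely on a small perturbation); for this lemma the two are interchangeable. One point worth keeping explicit in your write-up: the claim $\sup_{\D(R)}|\lambda|\to 0$ as $R\to 0$ requires positive definiteness of $V$ (so that $\D(R)$ shrinks to $\{0\}$), not just $V(0)=0$; this holds for the converse-Lyapunov function of \eqref{eq:lyap} and is used implicitly throughout the paper.
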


\begin{proof}The proof follows an argument made in \cite[Section 2.4.2]{MR1862985}.
For any $T>0$, the input-to-state mapping, and therefore the Gram observability matrix, are continuous with respect to the weak-$*$ topology over $L^\infty([0,T],\R)$. Banach--Alaoglu theorem implies that closed balls for the ${L^\infty([0,T],\R)}$ norm are compact. Then the image of $\mathcal{B}(r)=\{u\in
L^\infty([0,T],\R):\|u\|_{L^\infty([0,T],\R)}\leq r\}$ by $u\mapsto G_u(0,T)$ is a compact subset positive semi-definite symmetric matrices. By assumption,
$\gram_0(0,T)>\gmin \Id$, hence
there exists $r_\infty>0$ such that for all $u\in \mathcal{B}(r_\infty)$, $\gram_u(0,T)>\gmin \Id$. We then get our statement by picking $R_\infty$ small enough so that $V(\xhat)\leq R_\infty$ implies $|\lambda(\xhat)|\leq r_\infty$.
\end{proof}

Define
$
\rho
=
\dfrac{
    \sqrt{\smin^3/\smax}
}{
    |C|^2 \bar{m}
}.
$

\begin{lem}\label{L:no_switches_R_infty}
If there exists $k\in \N$, $\tau\in [t_{2k+1}, t_{2k+2})$ such that $V(\xhat(\tau))\leq R_\infty/(1+\eta)$ and $|\varepsilon(\tau)|\leq \rho R_\infty/(1+\eta)^2$, then $V(\xhat(t))\leq R_\infty$ for all $t\in[\tau,+\infty)$. Furthermore the system never switches after $\tau+T+\tobs$ (and remains in stabilization mode, i.e., $\kmax<\infty$).
\end{lem}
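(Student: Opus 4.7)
The plan is to split the statement into two parts: first, show $V(\xhat(t)) \leq R_\infty$ for all $t \geq \tau$; second, deduce from the first part that no switch occurs after $\tau + T + \tobs$. I would start by translating the assumption $|\eps(\tau)| \leq \rho R_\infty/(1+\eta)^2$ into a uniform bound on the perturbation driving $\xhat$. The key observation is that $\eps'S\eps$ is non-increasing on each mode (as in the computation preceding Lemma~\ref{lem:eps_conv}); combined with the bounds $\smin \Id \leq S \leq \smax \Id$, this gives $|\eps(t)| \leq \sqrt{\smax/\smin}\,|\eps(\tau)|$ for every $t \geq \tau$, and the definition of $\rho$ then yields the uniform estimate
\[
\bar m\,|S^{-1}(t)C'C\eps(t)| \leq |\eps(\tau)|/\rho \leq R_\infty/(1+\eta)^2 \qquad \forall t \geq \tau.
\]

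For the boundedness claim I would argue mode by mode, starting from the stabilization mode containing $\tau$. In any stabilization mode entered with $V(\xhat) \leq R_\infty/(1+\eta)$, the differential inequality $\frac{\diff}{\diff t}V(\xhat) \leq -V(\xhat) + R_\infty/(1+\eta)^2$ together with the comparison principle maintains $V(\xhat) \leq R_\infty/(1+\eta)$ throughout, since the forcing is strictly smaller than the initial value. In any subsequent observation mode, Lemma~\ref{L:V_small_increase} applied with $R = R_\infty/(1+\eta)$ gives $V(\xhat) < (1+\eta)R = R_\infty$ throughout the mode, with exit value again $\leq R_\infty/(1+\eta)$, so the induction restarts.

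For the termination claim, by the boundedness step and Lemma~\ref{lem:R_infty_exists}, for every $t \geq \tau + T$ the sliding window $[t-T, t]$ lies in $[\tau, \infty)$ where $V(\xhat) \leq R_\infty$, hence $g(t) > \gmin$. The switching condition $g \leq \gmin$ can therefore only be triggered at some $t_{2k+2} < \tau + T$, or never. In the former case the ensuing observation mode ends at $t_{2k+3} < \tau + T + \tobs$, and the next candidate switch $t_{2k+4} \geq t_{2k+3} + \tstab > \tau + T$ would again require $g \leq \gmin$, forcing $t_{2k+4} = +\infty$. In either case no switch occurs after $\tau + T + \tobs$ and $\kmax < \infty$.

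The main technical obstacle lies in the observation-mode step: Lemma~\ref{L:V_small_increase} requires the individual bound $|S^{-1}C'C\eps| \leq R$, whereas our estimate directly controls only the product $\bar m\,|S^{-1}C'C\eps|$. Converting between the two may require an implicit condition $\bar m \geq 1/(1+\eta)$, a tightening of $R_\infty$ in the preliminaries, or reliance on the extra decaying factor $\e^{-\theta\tstab/2}$ inherited from the stabilization window of length at least $\tstab$ preceding any observation mode. This careful bookkeeping of the perturbation across successive modes is the technical heart of the proof.
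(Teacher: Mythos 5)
Your proposal is correct and follows essentially the same route as the paper's proof: the uniform bound $|\eps(t)|\leq\sqrt{\smax/\smin}\,|\eps(\tau)|$ turns the hypothesis on $|\eps(\tau)|$ into the uniform perturbation bound $\bar m\,|S^{-1}C'C\eps|\leq R_\infty/(1+\eta)^2$; the bound $V(\xhat)\leq R_\infty$ is then propagated across modes (the paper invokes Lemma~\ref{L:V_decrease} where you use a direct comparison argument, which is equivalent); and Lemma~\ref{lem:R_infty_exists} shows the switching condition cannot be triggered at any candidate time beyond $\tau+T$, so at most one further observation mode occurs. Two caveats. First, your claim that the observation mode exits with $V(\xhat)\leq R_\infty/(1+\eta)$ is not what Lemma~\ref{L:V_small_increase} gives: with $R=R_\infty/(1+\eta)$ it only yields $V(\xhat(t_{2k+3}))<R_\infty$, so the induction as you state it would not iterate. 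This is harmless here because the termination argument caps the number of additional observation modes at one, and the final stabilization mode entered with $V<R_\infty$ keeps $V<R_\infty$ for all time via Lemma~\ref{L:V_decrease} applied with $(1+\eta)R=R_\infty$ --- which is exactly how the paper closes the loop --- but the imprecision should be repaired if one wants a genuine multi-mode induction. Second, the mismatch you flag between the hypothesis $|S^{-1}C'C\eps|\leq R$ of Lemma~\ref{L:V_small_increase} and the available control of $\bar m\,|S^{-1}C'C\eps|$ is real; the paper's own proof applies the lemma under the $\bar m$-weighted bound without comment, and the fix is the one you propose (absorb a factor $\min(1,\bar m)$ into $\rho$), consistent with the $\min(1,\bar m)$ appearing in the proof of Corollary~\ref{cor:stab}.
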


\begin{proof}
Over the interval $[\tau,+\infty)$, $|\varepsilon(t)|\leq
\sqrt{\smax/\smin} \,\varepsilon(\tau)$.
By definition of $\rho$,  $|\eps(\tau)|\leq \rho R_\infty/(1+\eta)^2$ implies that 
$$
\bar{m}|S^{-1}(t)C'C\eps(t)|\leq R_\infty/(1+\eta)^2,
\quad \forall t\geq \tau.
$$
Let $t^*=\inf\{t\geq\max(\tau,t_{2k+1}+\tstab)\mid g(t)\leq\gmin\}$. If $\tau<t^*$, then Lemma~\ref{L:V_decrease} applies and $V(\xhat(t))\leq R_\infty/(1+\eta)$ for all $[\tau,t^*)$. 
Assume $t^*\in (\tau +T,+\infty)$, then that would imply that we have both $g(t^*)\leq \gmin$ and $V(\xhat)\leq R_\infty$ over $[\tau, t^*]$. This is in contradiction with Lemma~\ref{lem:R_infty_exists}. Hence either $t^*=+\infty$, $g(t)>\gmin$ for all $t\in [\tau+T,+\infty)$ and the system never switches again, or $t^*\in[\tau,\tau+T)$. In that second case,  the system switches at time $t^*=t_{2k+2}$. Then Lemma~\ref{L:V_small_increase} applies to show that $V(\xhat(t_{2k+3}))<R_\infty$ and Lemma~\ref{L:V_decrease} applies to show that $V(\xhat(t))<R_\infty$ for all $t\in [t_{2k+3},t_{2k+4})$. Then, like before, if we define $t^{**}=\inf\{t\geq t_{2k+3}+\tstab\mid g(t)\leq\gmin\}$, we know that having $t^{**}<\infty$ implies a contradiction with Lemma~\ref{lem:R_infty_exists}. Hence why $t_{2k+4}=+\infty$ and $t_{2k+3}<\tau+T+\tobs$ is the last switching time.
In both cases, Lemma~\ref{L:V_decrease} allows to conclude that $V(\xhat)<R_\infty$ over $[\tau,+\infty)$.
\end{proof}

\begin{cor}\label{cor:conv_etat}
The state $x(t)$ and observer $\xhat(t)$ both tend to the target $0\in\R^n$ as $t$ tends to $\infty$. 
Moreover, there are at most a finite number of switches, i.e., $k_{\max}<\infty$.
\end{cor}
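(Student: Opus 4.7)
The plan is to combine the cycle estimates of Lemmas~\ref{L:V_small_increase} and~\ref{L:V_decrease} into a perturbed geometric recursion on the sequence $r_k := V(\xhat(t_{2k}))$, use this to eventually trigger Lemma~\ref{L:no_switches_R_infty} and force $\kmax<\infty$, then conclude convergence of $(\xhat, x)$ to the origin by a standard Lyapunov argument in the final stabilization mode. If $\kmax$ is already finite the first two steps are vacuous and only the last one is needed.

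I would first set $\delta(t) = \max(1,\bar m)\,|S^{-1}(t)C'C\eps(t)|$ and $\delta_k = \sup_{t \geq t_{2k}} \delta(t)$. By Lemma~\ref{lem:eps_conv} and the uniform bound $S \geq \smin \Id$ from the boundedness analysis, $\delta(t) \to 0$; since $t_{2k}\to \infty$ when $\kmax = \infty$, also $\delta_k \to 0$. Arguing by contradiction, suppose $\kmax=\infty$. For every $k$ with $R_k := \max(r_k, \delta_k) < R_0$, Lemma~\ref{L:V_small_increase} at $\tau = t_{2k}$ with this $R_k$ gives $V(\xhat(t_{2k+1})) < (1+\eta)R_k$, and Lemma~\ref{L:V_decrease} at $\tau = t_{2k+1}$ with the same $R_k$ gives at $t_{2(k+1)} \geq t_{2k+1}+\tstab$
$$
r_{k+1} \leq R_k(1+\eta+\tstab)\e^{-\tstab} = \kappa R_k,
$$
that is, $r_{k+1} \leq \kappa \max(r_k, \delta_k)$ with $\kappa < 1$ from Lemma~\ref{L:V_decrease}.

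The main obstacle is deducing $r_k \to 0$ from this perturbed geometric recursion. Given any $c > 0$, I would choose $k_0$ such that $\delta_k < c$ for all $k \geq k_0$. Either some $r_{k_1} \leq c$ with $k_1 \geq k_0$, in which case $r_{k+1} \leq \kappa \max(r_k, \delta_k) \leq \kappa c < c$ propagates $r_k \leq c$ to all $k \geq k_1$ by induction; or $r_k > c > \delta_k$ for all $k \geq k_0$, in which case the recursion degenerates to $r_{k+1} \leq \kappa r_k$ and forces $r_k < c$ after finitely many iterations, contradicting the case assumption. Hence $r_k \to 0$, and combined with the bound $V(\xhat(t)) \leq (1+\eta) R_k$ on $[t_{2k}, t_{2(k+1)})$ (which follows from the same two lemmas since $s \mapsto (1+\eta+s)\e^{-s}$ is decreasing and equals $1+\eta$ at $s=0$), this yields $V(\xhat(t)) \to 0$ along the cycles.

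Now since $V(\xhat(t_{2k+1})) \leq (1+\eta)R_k \to 0$ and $|\eps(t_{2k+1})|\to 0$, for $k$ large enough $V(\xhat(t_{2k+1})) \leq R_\infty/(1+\eta)$ and $|\eps(t_{2k+1})| \leq \rho R_\infty/(1+\eta)^2$; Lemma~\ref{L:no_switches_R_infty} applied at $\tau = t_{2k+1}$ then allows at most one further switch, contradicting $\kmax = \infty$. Hence $\kmax < \infty$. In the resulting final stabilization mode, $\eps \to 0$ (so $|S^{-1}C'C\eps|\to 0$), and the Lyapunov-type inequality
$$
\tfrac{\diff }{\diff t}V(\xhat) \leq -V(\xhat) + \Big|\tfrac{\partial V}{\partial x}(\xhat)\Big|\,|S^{-1}C'C\eps|
$$
on the bounded trajectory yields $V(\xhat) \to 0$ by a standard vanishing-perturbation comparison argument. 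Therefore $\xhat \to 0$ and $x = \xhat - \eps \to 0$.
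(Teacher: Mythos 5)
Your proposal is correct and follows essentially the same route as the paper: a contraction by the factor $\kappa$ of Lemma~\ref{L:V_decrease} over successive observation--stabilization cycles (combined with Lemma~\ref{L:V_small_increase}), then Lemma~\ref{L:no_switches_R_infty} to terminate the switching, then convergence of $V(\xhat)$ in the final stabilization mode together with $\eps\to 0$. Your explicit perturbed recursion $r_{k+1}\leq\kappa\max(r_k,\delta_k)$ is a slightly more careful bookkeeping of the interplay between the decaying Lyapunov level and the decaying perturbation $|S^{-1}C'C\eps|$ than the paper's direct iteration $V(\xhat(t_{2\bar k+2\ell+1}+\tstab))<\kappa^{\ell}R_0$, but the decomposition, the key lemmas and the overall structure are the same.
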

\begin{proof}
We know that $\varepsilon\to 0$, it is sufficient to prove $\xhat\to 0$. This is achieved by proving that $V(\xhat)\to 0$. Since $\varepsilon\to 0$, there exists $\bar t$ such that for all $t\geq \bar t$, $|\varepsilon|\leq \rho R_\infty /(1+\eta)^2$. 

Under these circumstances, we can check that there are at most a finite number of switches until $V(\xhat)<R_\infty/(1+\eta)$. Indeed if the sequence $(t_k)$ is infinite, we denote $\bar k$ the first index for which $t_{2k+1}+\tstab>\bar t$. For all $k\geq \bar k $, we have by Lemmas \ref{L:V_small_increase} and \ref{L:V_decrease} that 
$$
V(\xhat(t_{2k+3}+\tstab))<\kappa V(\xhat(t_{2k+1}+\tstab)). 
$$
Since for all $k\in \N$, $V(\xhat(t_{2k+1}+\tstab))\leq R_0$, we have 
$$
V(\xhat(t_{2\bar{k}+2\ell+1 }+\tstab))<\kappa^\ell R_0 \xrightarrow[\ell \to \infty]{} 0.
$$
Hence there are at most a finite number of switches before $V(\xhat(t_{2k+1 }+\tstab))<R_\infty/(1+\eta)$ for all $k$ large enough. Then Lemma~\ref{L:no_switches_R_infty} applies with $\tau=t_{2k+1 }+\tstab$, implying that there are no more switches over $[\tau+T+\tobs,+\infty)$. Once this holds, Lemma~\ref{L:V_decrease} implies that $V(\xhat)\to 0$, and thus $\xhat\to0$.
\end{proof}

Finally, let us prove the convergence of $S$ towards $S_\infty$.

\begin{cor}\label{cor:conv_S}
The Lyapunov equation $A(0)'S_\infty + S_\infty A(0) + \beta S_\infty = C'C$ admits a unique solution $S_\infty\in\sym$.
Moreover, the dynamic gain matrix $S$ tend to $S_\infty$ as $t$ tends to $\infty$.
\end{cor}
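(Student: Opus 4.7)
The plan is to first establish existence and uniqueness of $S_\infty$ as a textbook consequence of Lyapunov equation theory, then to combine the finiteness of switches (Corollary~\ref{cor:conv_etat}) with an ISS-style perturbation argument to conclude that $S(t)\to S_\infty$.

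For existence and uniqueness, I would rewrite the algebraic equation as $\tilde A'S + S\tilde A = C'C$ with $\tilde A := A(0) + \tfrac{\beta}{2}\Id$. Since Section~\ref{sec:prel} imposes $\beta > 2 a_F$, and $\|A(0)\|\leq a_F$, the matrix $-\tilde A$ is Hurwitz. Classical Lyapunov theory then delivers the unique symmetric solution
\[
S_\infty \;=\; \int_0^{+\infty} \e^{-\beta\tau}\,\e^{-A(0)'\tau}C'C\,\e^{-A(0)\tau}\,\diff\tau,
\]
and Assumption~\ref{ass:0_observable} (observability of $(C,A(0))$) ensures $v'S_\infty v>0$ for every $v\neq 0$, placing $S_\infty$ in $\sym$.

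For the convergence, Corollary~\ref{cor:conv_etat} produces a last switching time $t_\star<+\infty$ past which the system evolves according to \eqref{E:system_u=lambda} with $(u,\theta) = (\lambda(\xhat),\beta)$, and $\xhat(t)\to 0$. Under the natural normalization $\lambda(0)=0$ (implicit in the statement, since otherwise the limit would be governed by $A(\lambda(0))$ rather than $A(0)$), the continuity of $A\circ\lambda$ gives $A(\lambda(\xhat(t)))\to A(0)$. Setting $\tilde S := S - S_\infty$ and subtracting the Lyapunov identity defining $S_\infty$ from \eqref{E:system_u=lambda} yields
\[
\dot{\tilde S} \;=\; -A(\lambda(\xhat))'\tilde S \;-\; \tilde S A(\lambda(\xhat)) \;-\; \beta\tilde S \;+\; F(t),
\]
with $F(t) := (A(0)-A(\lambda(\xhat(t))))'S_\infty + S_\infty(A(0)-A(\lambda(\xhat(t))))$, and $F(t)\to 0$ as $t\to\infty$.

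Finally, the variation-of-constants formula~\eqref{E:VariationConst} applied to $\tilde S$ (with source $F(s)$ in place of $C'C$) gives, for every $t\geq t_\star$,
\[
\tilde S(t) \;=\; \e^{-\beta(t-t_\star)}\Phi_{u}(t,t_\star)'\tilde S(t_\star)\Phi_{u}(t,t_\star) \;+\; \int_{t_\star}^{t} \e^{-\beta(t-s)}\Phi_{u}(t,s)'F(s)\Phi_{u}(t,s)\,\diff s,
\]
where $u=\lambda\circ\xhat$ on $[t_\star,+\infty)$. Using $\|\Phi_u(t,s)\|\leq \e^{a_\infty(t-s)}$ and $\beta > 2a_F\geq 2a_\infty$, the first term decays exponentially. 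For the forced term, given $\delta>0$ I would pick $T_\delta\geq t_\star$ so that $\|F(s)\|\leq\delta$ for $s\geq T_\delta$; the contribution from $[t_\star,T_\delta]$ carries a prefactor $\e^{-(\beta-2a_\infty)(t-T_\delta)}$ that vanishes as $t\to\infty$, while the contribution from $[T_\delta,t]$ is bounded above by $\delta/(\beta-2a_\infty)$. Letting $\delta\to 0$ gives $\tilde S(t)\to 0$. The only real subtlety is the normalization ensuring $A(\lambda(\xhat(t)))\to A(0)$; past that, the remainder is a routine ISS estimate coupled with standard Lyapunov equation theory.
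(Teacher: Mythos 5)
Your proof is correct, and its skeleton matches the paper's: isolate the last switching time via Corollary~\ref{cor:conv_etat}, form the error $\tilde S=S-S_\infty$, note that it satisfies the homogeneous Lyapunov differential equation driven by a forcing term that vanishes because $A(\lambda(\xhat(t)))\to A(0)$, and conclude by exponential stability with vanishing input. The technical execution differs in one place: the paper derives the decay by computing $\frac{\diff}{\diff t}\tr\bigl((S-S_\infty)^2\bigr)$ and using Cauchy--Schwarz and Young's inequalities to obtain a scalar differential inequality $\frac{\diff}{\diff t}\tr(\tilde S^2)\leq -\nu_1\tr(\tilde S^2)+\nu_2\tr\bigl((A(u)-A(0))'(A(u)-A(0))\bigr)$, whereas you instead apply the variation-of-constants representation \eqref{E:VariationConst} to $\tilde S$ with source $F$ and run a split-integral (Duhamel) estimate; both are valid, and yours has the mild advantage of reusing a formula already established in the paper instead of introducing the unnamed constants $\nu_1,\nu_2$. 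You are also more explicit than the paper on the existence--uniqueness half: the paper only asserts that existence ``follows from observability,'' while you correctly separate the role of $\beta>2a_F$ (which places the spectrum of $A(0)+\tfrac{\beta}{2}\Id$ in the open right half-plane and hence yields solvability and the integral formula) from the role of Assumption~\ref{ass:0_observable} (which upgrades $S_\infty\geq 0$ to $S_\infty\in\sym$). Finally, the normalization $\lambda(0)=0$ that you flag is indeed needed, but it is needed by the paper's own proof at exactly the same step (to get $A(u(t))\to A(0)$), so it is a shared implicit hypothesis rather than a defect of your argument.
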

\begin{proof}
The existence of $S_\infty$ follows from the observability of $(C, A(0))$.
Let $\tau_0>0$ be such that there are no more switches $\tau_0$, i.e, $\tau_0\geq t_{\kmax-2}$.
Then $\dot S = -A(u(t))'S - A(u(t))S-\beta S+C'C$ for all $t\geq \tau_0$.
Recall that $\beta>2\sqrt{\tr(A(u(t))'A(u(t)))}$ (see Section~\ref{sec:prel}).
Computing $\frac{\diff}{\diff t}\tr((S(t)-S_\infty)^2)$ and using Cauchy-Schwartz and Young's inequalities, we get 
that there exist two positive constant $\nu_1$ and $\nu_2$ such that
\begin{align*}
\frac{\diff}{\diff t}\tr((S(t)-S_\infty)^2)
\leq -\nu_1 \tr((S(t)-S_\infty)^2) + \nu_2 \tr((A(u(t)) - A(0))'(A(u(t)) - A(0)))
\end{align*}
Since $\xhat(t)\to0$ as $t\to+\infty$ according to Corollary~\ref{cor:conv_etat}, and $\lambda$ and $A$ are continuous,
$\tr((A(u(t)) - A(0))'(A(u(t)) - A(0))) \to 0$.
Hence, $\tr((S(t)-S_\infty)^2) \to 0$ as $t\to+\infty$, i.e., $S(t)\to S_\infty$.
\end{proof}

\subsection[Local stability of state and observer]{Local stability of $(x, \hat x)$}

\begin{cor}\label{cor:stab}
    For all $R>0$, there exists $r>0$ such that for all $\tau\in\R_+$, if $|x(\tau)|<r$ and $|\xhat(\tau)|<r$
    then $|x(t)|<R$ and $|\xhat(t)|<R$ for all $t\geq \tau$.
\end{cor}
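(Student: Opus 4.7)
The plan is to adapt the argument of Lemma~\ref{L:no_switches_R_infty} by replacing the absorbing sublevel set $\{V(\xhat)\leq R_\infty\}$ with a smaller one $\{V(\xhat)\leq\delta\}$ tuned to the target radius $R$, and then to use continuous dependence on initial data over a bounded transient to bring the trajectory into the configuration where this adapted lemma applies.

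Given $R>0$, continuity of $V$ together with $V(0)=0$ allows one to pick $\delta\in(0,R_\infty/(1+\eta))$ such that $V(z)\leq(1+2\eta)\delta$ implies $|z|<R/2$. Since $\delta\leq R_\infty$, Lemma~\ref{lem:R_infty_exists} still applies with $R_\infty$ replaced by $\delta$. The only use of $R_\infty$ in the proof of Lemma~\ref{L:no_switches_R_infty} is to invoke this Gramian lower bound, so the same proof yields the following adaptation: if at some $\sigma$ in a stabilization mode $[t_{2k+1},t_{2k+2})$ we have $V(\xhat(\sigma))\leq\delta/(1+\eta)$ and $|\eps(\sigma)|\leq\rho\delta/(1+\eta)^2$, then $V(\xhat(t))\leq\delta$ for all $t\geq\sigma$, and the system switches at most once more.

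It remains to produce such a $\sigma\geq\tau$ when $r$ is small and to control $|x|,|\xhat|$ on the transient $[\tau,\sigma]$. Depending on whether $\tau$ lies in a stabilization mode $[t_{2k+1},t_{2k+2})$ or in an observation mode $[t_{2k},t_{2k+1})$, set $\sigma=\tau$ or $\sigma=t_{2k+1}$ respectively, so that in all cases $\sigma-\tau\leq\tobs$. The boundedness step furnishes uniform positive bounds $\smin\Id\leq S(t)\leq\smax\Id$ along every trajectory issued from $\K\times\Khat\times\S$, so the flows governing $(x,\xhat,\eps)$ on any such transient have Lipschitz constants depending only on $a_0$, $a_\infty$, $\smin$, $\smax$ and $|C|$, independently of the absolute switching time. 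Hence the map $(x(\tau),\xhat(\tau))\mapsto(x(\sigma),\xhat(\sigma),\eps(\sigma))$ is continuous at the origin, uniformly in $\tau\in\R_+$. For $r$ sufficiently small, $|x(\tau)|,|\xhat(\tau)|<r$ thus enforces $V(\xhat(\sigma))\leq\delta/(1+\eta)$, $|\eps(\sigma)|\leq\rho\delta/(1+\eta)^2$ and $|x(t)|,|\xhat(t)|<R$ throughout $[\tau,\sigma]$. The adapted Lemma~\ref{L:no_switches_R_infty} then yields $|\xhat(t)|<R/2$ for $t\geq\sigma$, while the Lyapunov decay of $\eps'S\eps$ from Lemma~\ref{lem:eps_conv} gives $|\eps(t)|\leq\sqrt{\smax/\smin}\,|\eps(\sigma)|$, so that $|x(t)|\leq|\xhat(t)|+|\eps(t)|<R$ after a further shrinking of $r$.

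The main obstacle is to ensure that this continuity on the transient $[\tau,\sigma]$ is \emph{uniform} in the switching history; this uniformity rests entirely on the global lower bound $S\geq\smin\Id$ established in the boundedness step, which guarantees that the observer correction $S^{-1}C'C\eps$ remains globally Lipschitz in $\eps$ with a constant independent of the mode and of $\tau$.
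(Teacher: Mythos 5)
Your proposal is correct and follows essentially the same route as the paper's proof: shrink the absorbing sublevel set by exploiting that Lemma~\ref{lem:R_infty_exists} applies to any threshold below $R_\infty$, split according to whether $\tau$ falls in an observation or a stabilization mode, control the transient up to the next stabilization mode, invoke Lemma~\ref{L:no_switches_R_infty}, and use the decay of $\eps' S \eps$ to handle $x = \xhat - \eps$. The only (minor) divergence is that you bound the observation-mode transient by a generic Gr\"onwall/uniform-Lipschitz estimate resting on $\smin \Id \leq S \leq \smax \Id$, where the paper reuses Lemma~\ref{L:V_small_increase} for the same purpose; both arguments are equivalent in substance.
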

\begin{proof}
Recall that $\eps = \xhat-x$ and $V$ is a Lyapunov function.
Hence, equivalently, we prove that for all $R_{\xhat}>0$ and all $R_\eps>0$, there exist $r_{\xhat}>0$ and $r_\eps>0$ such that for all $\tau\in\R_+$, if $V(\xhat(\tau))<r_\eps$ and $|\eps(\tau)|<r_{\xhat}$,
then $V(\xhat(t))<R_{\xhat}$ and $|\eps(t)|<R_\eps$ for all $t\geq \tau$.
According to the proof of Lemma~\ref{lem:eps_conv}, $|\eps(t)|\leq \sqrt{\smax/\smin} |\eps(\tau)|$. Hence, we choose $r_\eps \leq R_\eps\sqrt{\smin/\smax}$.
Remark that, according to Lemma~\ref{lem:R_infty_exists}, any $\tilde R_\infty<R_\infty$ is also such that $V(\xhat)\leq \tilde R_\infty$ implies $\gram_{\lambda\circ \xhat}(0,T)>\gmin\Id$.
Hence, with no loss of generality, we can suppose $R_\infty=R_{\xhat}$.
Then, if $\tau\in[t_{2k+1}, t_{2k+2})$ for some $k$,
then the result holds if $r_{\xhat}\leq R_{\xhat}/(1+\eta)$ and $r_\eps\leq \rho R_{\xhat}/(1+\eta)^2$
according to Lemma~\ref{L:no_switches_R_infty}.
Finally, if $\tau\in[t_{2k}, t_{2k+1})$ for some $k$,
then $|\eps(t)|< r_\eps\sqrt{\smax/\smin}$ for all $t\geq \tau$ by Lemma~\ref{lem:eps_conv}. 
Hence $|S^{-1}C'C\eps(t)|<r_\eps \sqrt{\smax/\smin^3} |C|^2$.
According to Lemma~\ref{L:V_small_increase}, it implies that
$V(\xhat(t_{2k+1})) < (1+\eta)\max(r_{\xhat}, r_\eps \sqrt{\smax/\smin^3} |C|^2)$.
Hence the result holds if
$(1+\eta)\max(r_{\xhat}, r_\eps \sqrt{\smax/\smin^3} |C|^2)<R_{\xhat}/(1+\eta)$ and $r_\eps\sqrt{\smax/\smin}<\rho R_{\xhat}/(1+\eta)^2$
according to Lemma~\ref{L:no_switches_R_infty}.
Hence, in any case, $V(\xhat(t))<R_{\xhat}$ and $|\eps(t)|<R_\eps$ hold for all $t\geq \tau$ if $r_\eps \leq \min(R_\eps, R_{\xhat} \rho\min(1, \bar{m})/(1+\eta)^2)\sqrt{\smin/\smax}$ and $r_{\xhat} \leq R_{\xhat}/(1+\eta)^2$.
\end{proof}

The combination of Corollary~\ref{cor:conv_etat}, \ref{cor:conv_S} and \ref{cor:stab} conclude the proof of Theorem~\ref{th:main}.

\section{Numerical simulations}

We propose a numerical simulation of the stabilization strategy of Theorem~\ref{th:main} (numerical implementation can be found in repository \cite{git}).
In dimension $n=2$, we choose $A(u) = \begin{pmatrix}
0&1+u\\-(1+u)&0
\end{pmatrix}$, $B(u)=\begin{pmatrix}
u\\0
\end{pmatrix}$ and $C = \begin{pmatrix}
0&1
\end{pmatrix}$ for all $u\in\R$.
Note that the pair $(C, A(0))$ is observable, hence Assumption~\ref{ass:0_observable} is satisfied.
This system is not uniformly observable, since the pair $(C, A(-1))$ is not observable.
One can easily check that the linear feedback law $\lambda$ defined by $\lambda(x) = \begin{pmatrix}
-1&0
\end{pmatrix}x$ for all $x\in\R^2$ is globally asymptotically stabilizing by considering the Lyapunov function $V:x\mapsto |x|^2$. Hence, following the discussion below Assumption \ref{ass:stab}, one can exhibit a bounded smooth asymptotically stabilizing state feedback coinciding with $\lambda$ over an arbitrarily large compact set, hence having arbitrary large basin of attraction.

The parameters of the switched dynamic output feedback are chosen according to Table~\ref{tab:param}. Moreover, the dynamics of $G_u(t-T, t)$ given in \eqref{eq:gram} is replaced by a stable version where $A(u)$ is replaced by $A(u)+\gamma\Id$ with $\gamma=10$, in order to ensure robustness with respect to numerical integration errors (the modification has no effects on the positivity of the Gram observability matrix).
The initial conditions are $x_0 = (-10, 0)$, $\hat x_0 = (-15, 5)$ and $S_0 = \Id$. This choice leads the system to cross the observability singularity $u=-1$ when using the control $u=\lambda(\hat x)$, i.e., $\hat x_1=1$. The Cauchy problem system is solved by means of a Runge-Kutta (2,3) method, taking into account the delayed term appearing in \eqref{eq:res}--\eqref{eq:gram}.
The resulting trajectory $(x, \xhat)$ is plotted in Figure~\ref{fig:traj} for $t\in[0, 50]$.
The switching times between observation and stabilization modes are emphasized on all figures.
In Figure~\ref{fig:norm}, the evolution of $|x|^2$ and $|\eps|^2$ is plotted.
The value of the lowest eigenvalue of the observability Gramian is shown in Figure~\ref{fig:gram}, as well as a square signal whose value represents the mode of the system  (starting with observability).
As expected by Theorem~\ref{th:main}, both the estimation error and the state's norm converge to zero. Note that the system only switches a finite number of times (10 times) before 
entering a stabilization mode it doesn't leave.
Observation modes start when the observability Gramian is too low, and make it increase. This helps the system to cross the observability singularity $\hat x_1=1$.

\begin{table}[ht!]
    \centering
    \begin{tabular}{|c|c|c|}
        \hline
        $\tstab = 3$ & $\tobs = 2$& $T = 1$\\
        \hline
        $\gmin = 5*10^{-4}$ &
        $\beta = 1$ & $\alpha = 1$\\
        \hline
    \end{tabular}
    \caption{Parameters of the numerical simulation}
    \label{tab:param}
\end{table}

\begin{figure}[ht!]
    \centering
    \includegraphics{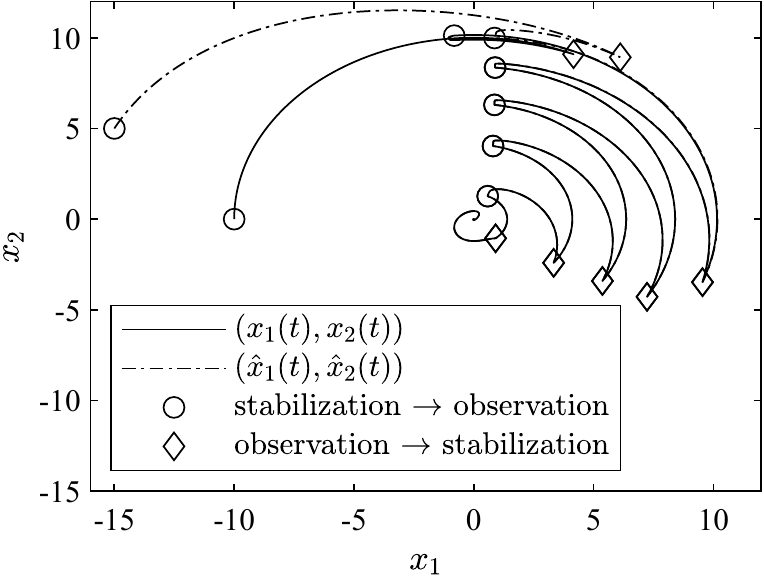}
    \caption{Trajectory of the closed-loop system. After 10 switches from one mode to an other, the system enters in a final stabilization mode.}
    \label{fig:traj}
\end{figure}

\begin{figure}[ht!]
    \centering
    \includegraphics{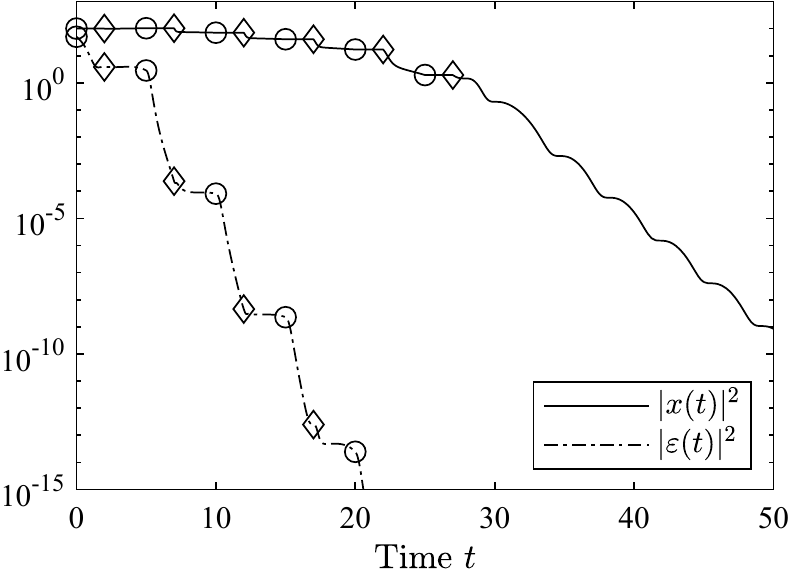}
    \caption{Evolution of the norms of the state and estimation error}
    \label{fig:norm}
\end{figure}

\begin{figure}[ht!]
    \centering
    \includegraphics{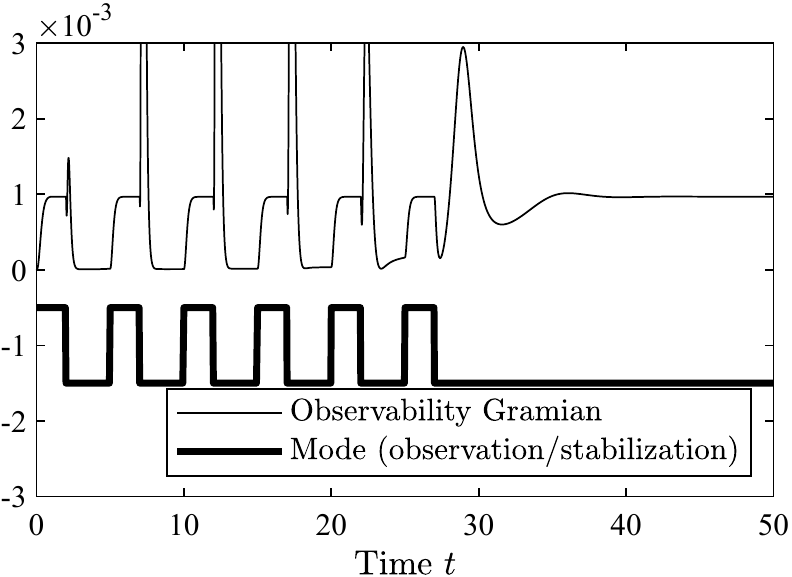}
    \caption{Evolution of the lowest eigenvalue of the observability Gramian. In bold, the squared signal represent the mode: the system is in observation mode when the signal is at its maximum, and in stabilization mode when it is at its minimum.}
    \label{fig:gram}
\end{figure}

\section{Conclusion}

In this paper, we have proposed a new output feedback stabilization strategy for non-uniformly observable state-affine systems based on switches between observation and stabilization modes.
Our main result states
that for any compact set of initial conditions,
and for well-chosen constants of time, observer gains and switching conditions, 
trajectories of the resulting closed-loop system converge to the target point and present some stable behavior.
Moreover, we show that the system actually switches a finite number of times, before entering in a final stabilization mode.
Numerical simulations on a harmonic oscillator with time-dependent speed confirm this theoretical behavior.

In future works, two important questions remain be tackled. First, stability of the closed-loop system with respect to measurement noise or perturbations of the Gram observability matrix used in the switching condition could be explored and numerically tested.
Then, it would be interesting to investigate the switching condition (currently depending on the Gram observability matrix) and to see what general class of conditions could be used instead.

\bibliographystyle{alpha}
\bibliography{references.bib}

\end{document}